\documentclass[11pt]{amsart}

\usepackage[letterpaper,margin=1in]{geometry}
\usepackage{amsmath,amssymb,amsthm,mathtools}
\usepackage{booktabs,longtable,array}
\usepackage{graphicx}
\usepackage{microtype}
\usepackage{hyperref}
\hypersetup{colorlinks=true,linkcolor=blue,citecolor=blue,urlcolor=blue,pdftitle={Separable integer partition classes and Slater's list -- II}}

\newtheorem{theorem}{Theorem}[section]
\newtheorem{lemma}[theorem]{Lemma}
\newtheorem{cor}[theorem]{Corollary}

\theoremstyle{definition}

\theoremstyle{remark}
\newtheorem{remark}[theorem]{Remark}

\numberwithin{equation}{section}
\allowdisplaybreaks
\newcommand\numberthis{\addtocounter{equation}{1}\tag{\theequation}}

\begin{document}

\title[Separable integer partition classes and Slater's list -- II]{Separable integer partition classes and Slater's list -- II}

\author{Aritram Dhar}
\address{Department of Mathematics, University of Florida, Gainesville, FL 32611, USA}
\email{aritramdhar@gmail.com, aritramdhar@ufl.edu}

\author{Ankush Goswami}
\address{School of Mathematical and Statistical Sciences, University of Texas Rio Grande Valley, Edinburg, TX 78541, USA}
\email{ankushgoswami3@gmail.com, ankush.goswami@utrgv.edu}

\author{Runqiao Li}
\address{School of Mathematical and Statistical Sciences, University of Texas Rio Grande Valley, Edinburg, TX 78541, USA}
\email{runqiao.li@utrgv.edu}

\date{June 25, 2026}
\subjclass[2020]{05A17, 05A19, 11P81, 11P84, 11F37, 33D15}
\keywords{partitions, overpartitions, separable integer partition classes, Slater's list, partitions with restricted gaps, partitions with positional gaps}

\begin{abstract}
Slater's list of Rogers--Ramanujan type identities remains a central source of striking series--product formulas in the theory of partitions and basic hypergeometric series. Although many of these identities admit elegant analytic proofs through Bailey pairs, Bailey chains, or transformations of basic hypergeometric series, the partition-theoretic meaning of their series sides is often much less apparent. In this paper, which continues the program initiated in  \href{https://arxiv.org/abs/2603.14179}{https://arxiv.org/abs/2603.14179}, we apply Andrews' theory of separable integer partition classes to further identities from Slater's list. We construct a strict overpartition class and two families of overpartitions with positional gap conditions, in which overlining is permitted only at alternating positions. Their multivariate generating functions give natural refinements of the series sides of Slater's identities (12), (28), (29), (47), (48), (50), and (51). We then use Heine-type transformations, a limiting form of Heine's transformations, Watson's $q$-analogue of Whipple's theorem, and classical theta-product identities to obtain alternative series representations and recover the associated products. In addition, we derive a new companion identity to a Slater identity and a signed companion formula. Our results further demonstrate that SIP classes provide a flexible framework for converting basic hypergeometric series into structured partition generating functions, while simultaneously producing refinements, transformations, and new Rogers--Ramanujan type identities.
\end{abstract}

\maketitle
\section{Introduction}

The Rogers--Ramanujan identities and their numerous generalizations form one of the principal meeting points of partition theory, modular forms, and basic hypergeometric series. Beginning with the work of Rogers \cite{Rogers1894}, and continuing through the discoveries of Ramanujan, Bailey, Watson, and many others, these identities have revealed a remarkable phenomenon: a $q$-hypergeometric series with a quadratic exponent frequently coincides with an infinite product having a simple congruence-theoretic meaning. Standard accounts of this subject may be found in the books of Andrews \cite{A}, Andrews and Berndt \cite{AndrewsBerndt2005,AndrewsBerndt2009}, Fine \cite{Fine88}, and Gasper and Rahman \cite{GasperRahman2004}.

A particularly influential compilation is Slater's list of 130 Rogers--Ramanujan type identities \cite{Slater1952}. The identities in this list exhibit a wide range of analytic shapes, including single sums, multisums, theta-type series, and products of eta-quotient type. Their analytic derivations are closely tied to Bailey pairs and related transformation theory; see, for example, Andrews' analytic treatment of the Rogers--Ramanujan identities \cite{Andrews1974} and the discussions in \cite{A,AndrewsBerndt2005,AndrewsBerndt2009,Fine88,GasperRahman2004}. However, an analytic proof of a series--product identity does not automatically explain why the series side should enumerate a natural class of partitions. This problem is especially pronounced when the summand contains factors such as
\[
\frac{(-q;q^2)_n}{(q;q)_{2n}},
\qquad
\frac{(-1;q^2)_n}{(q;q)_{2n}},
\qquad\text{or}\qquad
\frac{(-q^2;q^2)_n}{(q;q)_{2n+1}},
\]
whose partition-theoretic meaning is not immediately visible from the formula itself. The notation and terminology used here and throughout this section are defined in Section \ref{sec:prelim}.

The theory of separable integer partition classes, introduced by Andrews \cite{AndrewsSIP}, provides a systematic mechanism for addressing this issue. Roughly speaking, an SIP class consists of a finite basis of minimal configurations together with a freely extendable tail whose parts are multiples of a fixed modulus. This decomposition converts the generating function of the basis into a $q$-hypergeometric series and, at the same time, preserves a direct partition interpretation. Andrews used this framework to revisit several classical partition theorems, including the first G\"ollnitz--Gordon identity and Schur's partition theorem \cite{AndrewsSIP}. Since then, the SIP point of view has been developed further in a number of directions, including partition classes with congruence restrictions, refined gap conditions, and multivariate generalizations; see \cite{ChenHeTangWei2024,HeHuangLiZhang2025,LiThesis2025}.

The present paper is the second part of a broader study of Slater's list through the SIP framework. In \cite{DGLSIPI}, we considered identities associated with strict partitions having mod-$4$ gap restrictions, strict overpartitions, and partitions satisfying positional gap conditions. The principal theme of that paper was that suitably chosen SIP bases lead naturally to multivariate refinements of otherwise opaque series sides, and that these refinements can be transformed analytically into product identities or alternative hypergeometric expressions. The present paper continues this program with a different collection of identities from Slater's list and with new partition families in which the parity of the \emph{position} of a part plays an essential role.

More precisely, we study the identities listed in Table~\ref{tab:slater-identities}, together with two accompanying formulas that arise naturally from the same SIP constructions. The first accompanying formula is a new companion to a classical Slater identity, while the second is a signed companion obtained from the same mod-$4$ partition family. The main new objects are overpartition classes in which overlining is allowed only at prescribed alternating positions. This positional restriction is intrinsic to the constructions: it produces the factors
\[
(-aq;q^2)_n
\qquad\text{and}\qquad
(-aq^2;q^2)_n
\]
in the multivariate generating functions.

For instance, the series side of Slater's identity (29),
\[
\sum_{n\geq0}\frac{(-q;q^2)_n q^{n^2}}{(q;q)_{2n}},
\]
contains a mixture of an odd-step Pochhammer factor and a denominator of length $2n$, making a direct interpretation far from obvious. We show that it arises naturally from an SIP class of overpartitions in which only odd-indexed parts may be overlined and in which the allowed gap between each odd-indexed part and the following even-indexed part depends on whether the former is overlined. A companion class, with overlining permitted at even-indexed positions, accounts for identities (28), (50), and (51).

The constructions in this paper have three related features. First, each partition family has an explicitly described basis, and the corresponding SIP decomposition gives a multivariate generating function in which the auxiliary variables record natural statistics such as the number of overlined parts, the number of odd-indexed or even-indexed positions, and the numbers of odd and even parts. Second, these refined series admit useful transformations. A limiting form of Heine's transformations gives a convenient two-parameter transformation that repeatedly converts the SIP series into alternative hypergeometric forms. In selected cases, Watson's $q$-analogue of Whipple's theorem, Jacobi's triple product identity, Euler's pentagonal number theorem, and the quintuple product identity reduce the transformed expressions to infinite products. Third, the refined formulas reveal companion identities which are not apparent from Slater's original list. Thus, the present paper not only interprets further entries of Slater's list, but also shows how the SIP method can generate new identities from previously constructed partition classes.

The remainder of the paper is organized as follows. Section~\ref{sec:prelim} records the SIP framework and the analytic transformations used throughout. In Section~\ref{sec:strictGG}, we treat Slater's identity (12) through strict overpartitions and obtain two new G\"ollnitz-Gordon type companions. We study these new companions through a shifted mod-$4$ partition class, develop its signed analogue, and record the resulting partition-theoretic consequences. In Section~\ref{sec:S1}, we introduce an SIP class of overpartitions with admissible overlines at even-indexed positions and derive identities (28), (50), and (51), together with several companion transformations. Section~\ref{sec:S2} develops the corresponding odd-indexed overpartition class and obtains identities (29), (47), and (48). 

\begin{center}
\small
\renewcommand{\arraystretch}{1.8}
\begin{longtable}{>{\raggedright\arraybackslash}p{12.55cm}>{\centering\arraybackslash}p{2.7cm}}
\caption{Identities studied in this paper. The first seven occur in Slater's list. The label \textup{C28} denotes the companion to Slater's identity (28), while \textup{NGG2} and \textup{NGG2$'$} denote the new G\"ollnitz--Gordon companions.}\label{tab:slater-identities}\\
\toprule
\textbf{Identity} & \textbf{Source / label}\\
\midrule
\endfirsthead
\multicolumn{2}{c}{\tablename~\thetable\ (continued)}\\
\toprule
\textbf{Identity} & \textbf{Source / label}\\
\midrule
\endhead
\bottomrule
\endfoot
$\displaystyle
\sum_{n=0}^{\infty}\frac{(-1;q)_nq^{\frac{n(n+1)}{2}}}{(q;q)_{n}}
=\frac{(-q;q)_{\infty}(q^2,q^2,q^4;q^4)_{\infty}}{(q;q)_{\infty}}
$
& (12)\\
$\displaystyle
\sum_{n=0}^{\infty}\frac{(-q;q^2)_{n}q^{n^2+2n}}{(q^4;q^4)_{n}}
=\frac{(q^6;q^{12})_{\infty}}{(q^3,q^4,q^8,q^9;q^{12})_{\infty}}
$
& (NGG2)\\
$\displaystyle
\sum_{n=0}^{\infty}\frac{(-1)^n(q;q^2)_nq^{n^2+2n}}{(q^4;q^4)_n}
=\frac{(q^3,q^9;q^{12})_{\infty}}{(q^4,q^8;q^{12})_{\infty}}
$
& (NGG2$'$)\\
$\displaystyle
\sum_{n=0}^{\infty}\frac{(-q^2;q^2)_{n}q^{n(n+1)}}{(q;q)_{2n+1}}
=\frac{(-q^2;q^2)_{\infty}(-q,-q^5,q^6;q^6)_{\infty}}{(q^2;q^2)_{\infty}}
$
& (28)\\
$\displaystyle
\sum_{n=0}^{\infty}\frac{(-q;q^2)_{n}q^{n^2+n}}{(q;q)_{2n+1}}
=\frac{(-q^2;q^2)_{\infty}}{(q;q^2)_{\infty}}
=\frac{(q^4;q^4)_{\infty}}{(q;q)_{\infty}}
$
& (C28)\\
$\displaystyle
\sum_{n=0}^{\infty}\frac{(-q;q^2)_{n}q^{n^2}}{(q;q)_{2n}}
=\frac{(-q;q^2)_{\infty}(-q^2,-q^4,q^6;q^6)_{\infty}}{(q^2;q^2)_{\infty}}
$
& (29)\\
$\displaystyle
\sum_{n=0}^{\infty}\frac{(-1;q^2)_nq^{n^2}}{(q;q)_{2n}}
=\frac{(q^5,q^7,q^{12};q^{12})_{\infty}+q(q,q^{11},q^{12};q^{12})_{\infty}}{(q;q)_{\infty}}
$
& (47)\\
$\displaystyle
\sum_{n=0}^{\infty}\frac{(-1;q^2)_nq^{n(n+1)}}{(q;q)_{2n}}
=\frac{(q^5,q^7,q^{12};q^{12})_{\infty}-q(q,q^{11},q^{12};q^{12})_{\infty}}{(q;q)_{\infty}}
$
& (48)\\
$\displaystyle
\sum_{n=0}^{\infty}\frac{(-q;q^2)_nq^{n(n+2)}}{(q;q)_{2n+1}}
=\frac{(q^2,q^{10},q^{12};q^{12})_{\infty}}{(q;q)_{\infty}}
$
& (50)\\
$\displaystyle
\sum_{n=0}^{\infty}\frac{(-q;q^2)_nq^{n(n+1)}}{(q;q)_{2n+1}}
=\frac{(q^4,q^8,q^{12};q^{12})_{\infty}}{(q;q)_{\infty}}
$
& (51)\\
\end{longtable}
\end{center}

\section{Preliminaries}\label{sec:prelim}

Throughout, $|q|<1$. For $n\in\mathbb Z_{\geq0}\cup\{\infty\}$, we use the standard notation
\[
(a)_n = (a;q)_n:=\prod_{j=0}^{n-1}(1-aq^j),
\qquad
(a_1,\ldots,a_r)_n = (a_1,\ldots,a_r;q)_n:=\prod_{j=1}^r(a_j;q)_n.
\]
When the base is clear from the context, we write $(a)_n$ in place of $(a;q)_n$. We also use the Gaussian polynomial
\[
{m\brack r}_q:=\frac{(q;q)_m}{(q;q)_r(q;q)_{m-r}}
\qquad (0\le r\le m).
\]

A \textit{partition} $\lambda$ is a non-increasing finite sequence $\lambda = (\lambda_1,\lambda_2,\dots,\lambda_k)$ of positive integers. The elements $\lambda_i$ appearing in the sequence $\lambda$ are called the \textit{parts} of $\lambda$. The number of parts or \textit{length} of $\lambda$ is denoted by $\#(\lambda)$. The sum of all the parts of $\lambda$ is called the \textit{size} of $\lambda$ and denoted by $|\lambda|$. We say $\lambda$ is a partition of $n$ if its size is equal to $n$.  An overpartition \cite{overpartitions} is a partition $\lambda$ where the last (or first) appearance of each part may be overlined, the number of overlined parts is denoted by $a(\lambda)$.

We briefly recall the SIP framework introduced by Andrews \cite{AndrewsSIP}. Let $\mathcal P$ be a class of partitions and let $k$ be a positive integer. A subset $\mathcal B\subseteq\mathcal P$ is called a \emph{basis} for an SIP class of modulus $k$ if every partition $\lambda=(\lambda_1,\ldots,\lambda_n)\in\mathcal P$ has a unique decomposition
\[
\lambda_i=b_i+\pi_i \qquad (1\le i\le n),
\]
where $(b_1,\ldots,b_n)\in\mathcal B$ and $(\pi_1,\ldots,\pi_n)$ is a partition into nonnegative parts divisible by $k$, and conversely every such pair produces a partition in $\mathcal P$. If $B(n;q)$ denotes the generating function of basis partitions with $n$ parts, then
\begin{equation}\label{eq:sip-general-gf}
\sum_{\lambda\in\mathcal P}q^{|\lambda|}
=\sum_{n\ge0}\frac{B(n;q)}{(q^k;q^k)_n}.
\end{equation}
The same construction accommodates natural auxiliary statistics by incorporating their weights in $B(n;q)$. For a Young-diagram description of the basis--tail decomposition and further examples of its use in the study of Slater-type identities, we refer the reader to the first paper of this series \cite{DGLSIPI}.

\begin{theorem}[{Heine's transformations \cite[(1.4.4)--(1.4.6)]{GasperRahman2004}}]
\begin{align}
\sum_{n=0}^{\infty}\frac{(a)_n(b)_n}{(q)_n(c)_n}z^n&=\frac{(b,za;q)_{\infty}}{(c,z;q)_{\infty}}\sum_{n=0}^{\infty}\frac{(c/b)_n(z)_n}{(q)_n(az)_n}b^{n}.\label{eq:Heine1}\\
&=\frac{(c/b, bz;q)_{\infty}}{(c,z;q)_{\infty}}\sum_{n=0}^{\infty}\frac{(abz/c)_n(b)_n}{(q)_n(bz)_n}\left(\frac{c}{b}\right)^{n}\label{Heine2}\\
&=\frac{(abz/c;q)_{\infty}}{(z;q)_{\infty}}\sum_{n=0}^{\infty}\frac{(c/a)_n(c/b)_n}{(q)_n(c)_n}\left(\frac{abz}{c}\right)^{n}.\label{Heine3}
\end{align}
\end{theorem}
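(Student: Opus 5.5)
The plan is the classical Heine argument: prove \eqref{eq:Heine1} directly, then obtain \eqref{Heine2} and \eqref{Heine3} by applying \eqref{eq:Heine1} again with permuted parameters. Throughout assume $|z|<1$, $|b|<1$, and that no denominator vanishes; the identities then extend by analytic continuation to whatever ranges the later applications need.

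\textbf{Step 1 (proof of \eqref{eq:Heine1}).} Write
\[
\frac{(b)_n}{(c)_n}=\frac{(b;q)_\infty}{(c;q)_\infty}\cdot\frac{(cq^n;q)_\infty}{(bq^n;q)_\infty}.
\]
Expand the last quotient with the $q$-binomial theorem, $\frac{(cq^n)_\infty}{(bq^n)_\infty}=\sum_{m\ge0}\frac{(c/b)_m}{(q)_m}(bq^n)^m$. Substituting this into the left side gives a double sum over $n$ and $m$. The double sum converges absolutely under the stated conditions, so the two summations may be interchanged. The inner sum over $n$ is then $\sum_n \frac{(a)_n}{(q)_n}(zq^m)^n=\frac{(azq^m)_\infty}{(zq^m)_\infty}$, again by the $q$-binomial theorem. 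Finally use $\frac{(azq^m)_\infty}{(zq^m)_\infty}=\frac{(az)_\infty}{(z)_\infty}\cdot\frac{(z)_m}{(az)_m}$, and the right side of \eqref{eq:Heine1} follows.

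\textbf{Step 2 (proof of \eqref{Heine2}).} The sum on the right of \eqref{eq:Heine1} is itself a ${}_2\phi_1$. Its numerator parameters are $z$ and $c/b$, its denominator parameter is $az$, and its argument is $b$. Apply \eqref{eq:Heine1} to it with the roles $(a,b,c,z)\mapsto(c/b,\,z,\,az,\,b)$. This gives
\[
\frac{(z,c;q)_\infty}{(az,b;q)_\infty}\sum_{n\ge0}\frac{(az/z)_n(b)_n}{(q)_n(c)_n}z^n .
\]
That output is just the original series, so this particular ordering is useless. Instead choose $(a,b,c,z)\mapsto(z,\,c/b,\,az,\,b)$, taking $c/b$ as the "$b$" parameter. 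Step 1 then produces the prefactor $\frac{(c/b,\,bz)_\infty}{(az,\,b)_\infty}$ and the series $\sum\frac{(abz/c)_n(b)_n}{(q)_n(bz)_n}(c/b)^n$. Multiplying by the prefactor already present in \eqref{eq:Heine1} cancels $(b)_\infty$ and $(az)_\infty$, which leaves exactly \eqref{Heine2}.

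\textbf{Step 3 (proof of \eqref{Heine3}).} Apply \eqref{eq:Heine1} once more, this time to the series in \eqref{Heine2}. Take parameters $(b,\,abz/c,\,bz,\,c/b)$ with $abz/c$ in the "$b$" slot. Step 1 produces the prefactor $\frac{(abz/c,\,a)_\infty}{(bz,\,c/b)_\infty}$ and the series $\sum\frac{(c/a)_n(c/b)_n}{(q)_n(a\cdot c/b)_n}\,\ldots$. This shape is not obviously the target, so the tidier route is to swap the roles of $a$ and $b$ in \eqref{Heine2} before iterating. The left side of \eqref{eq:Heine1} is symmetric in $a$ and $b$, so that swap is allowed, and it yields
\[
\frac{(c/a,az)_\infty}{(c,z)_\infty}\sum_n\frac{(abz/c)_n(a)_n}{(q)_n(az)_n}\left(\frac ca\right)^n .
\]
Applying the Step 2 transform to this series gives $\frac{(abz/c)_\infty}{(z)_\infty}\sum\frac{(c/a)_n(c/b)_n}{(q)_n(c)_n}(abz/c)^n$, which is \eqref{Heine3}.

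The main obstacle is the parameter bookkeeping: each application must put the correct parameter in the "$b$" slot, since some orderings merely undo the previous transform. Checking the cancellation of the infinite products at each stage is the only real content. Convergence is handled by temporarily imposing $|b|,|c/b|,|abz/c|<1$ where needed and removing these conditions afterwards by analytic continuation.
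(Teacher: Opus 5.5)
Your route is the standard one. The paper gives no proof of this theorem and only cites Gasper--Rahman, where \eqref{eq:Heine1} is proved exactly as in your Step 1 and the other two forms follow by iterating it. Your Steps 1 and 2 check out, including the cancellation of $(b)_\infty$ and $(az)_\infty$.

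The flaw is in Step 3, in the very bookkeeping you identify as the real content. With $(A,B,C,Z)=(b,\,abz/c,\,bz,\,c/b)$ in \eqref{eq:Heine1}, one has $AZ=(c/b)\cdot b=c$, not $a$ or $ac/b$; you seem to have substituted $a$ for $A$. So that application gives the prefactor $\frac{(abz/c,\,c;q)_\infty}{(bz,\,c/b;q)_\infty}$ and the series $\sum_{n\ge0}\frac{(c/a)_n(c/b)_n}{(q)_n(c)_n}(abz/c)^n$. Multiplying by the prefactor $\frac{(c/b,\,bz;q)_\infty}{(c,\,z;q)_\infty}$ of \eqref{Heine2} cancels $(c)_\infty$, $(c/b)_\infty$ and $(bz)_\infty$, which gives \eqref{Heine3} at once. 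This is valid for $|c/b|<1$ and $|abz/c|<1$, and extends by analytic continuation. The route you abandoned is therefore the correct one, and the $a\leftrightarrow b$ detour is unnecessary.

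As written, the detour is also not verified. If ``the Step 2 transform'' means applying \eqref{Heine2} itself to $\sum_{n\ge0}\frac{(abz/c)_n(a)_n}{(q)_n(az)_n}(c/a)^n$, you only recover the original series or \eqref{Heine2} again, depending on which parameter goes in the $b$-slot. You never reach \eqref{Heine3} that way. What does work is \eqref{eq:Heine1} with $(A,B,C,Z)=(a,\,abz/c,\,az,\,c/a)$, which is just the $a\leftrightarrow b$ mirror of the computation you discarded. Replace Step 3 by the direct computation above and the proof is complete.
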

We next obtain the following useful result.
\begin{lemma}\label{lem1}
We have
\begin{align*}
 \sum_{n\ge 0}\dfrac{(A)_n}{(q)_n(B)_n}(-1)^n x^n q^{\frac{n(n-1)}{2}}&=\dfrac{(x)_\infty}{(B)_\infty}\sum_{n\ge 0}\dfrac{(Ax/B)_n}{(q)_n(x)_n}(-1)^nq^{n(n-1)/2}B^n.\\
 &=(Ax/B)_\infty\sum_{n\ge 0}\dfrac{(B/A)_n}{(q)_n(B)_n}\left(\dfrac{Ax}{B}\right)^n
 \end{align*}
\end{lemma}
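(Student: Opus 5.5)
This is a limiting case of Heine's transformations. Take $a=A$, $c=B$, $z=x/b$ in \eqref{eq:Heine1} and \eqref{Heine3}, and let $b\to\infty$.

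On the left-hand side, $(b)_n(x/b)^n=\prod_{j=0}^{n-1}(1-bq^j)\,(x/b)^n$. Each factor gives $(1-bq^j)/b\to -q^j$. Hence
\[
(b)_n(x/b)^n\to(-1)^nx^nq^{n(n-1)/2},
\]
which is exactly the summand on the left of the lemma.

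For the first equality, use \eqref{eq:Heine1}.
\begin{itemize}
\item The prefactor is $(b,Ax/b)_\infty/(B,x/b)_\infty$. As $b\to\infty$, $(Ax/b)_\infty\to1$ and $(x/b)_\infty\to1$.
\item The factor $(b)_\infty$ diverges, so first combine it with the summand. The summand is $\frac{(B/b)_n(x/b)_n}{(q)_n(Ax/b)_n}b^n$, which tends to $\frac{1}{(q)_n}b^n$; the $b^n$ grows. So this instance of \eqref{eq:Heine1} is the wrong one to take directly.
\item Instead, apply \eqref{eq:Heine1} with the roles of $a$ and $b$ interchanged. Set $a\mapsto b$, $b\mapsto A$, $c=B$, $z=x/b$. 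The left side is symmetric in $a,b$, so this is allowed.
\item The prefactor becomes $(A,x;q)_\infty/(B,x/b;q)_\infty\to(A,x)_\infty/(B)_\infty$.
\item The sum becomes $\sum\frac{(B/A)_n(x/b)_n}{(q)_n(x)_n}A^n\to\sum\frac{(B/A)_n}{(q)_n(x)_n}A^n$.
\end{itemize}
This gives a valid limit, but it is a different expression from the one in the lemma.

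To reach the lemma's form, use \eqref{Heine2} with $a=A$, $c=B$, $z=x/b$.
\begin{itemize}
\item The prefactor is $(B/b,x;q)_\infty/(B,x/b;q)_\infty\to (x)_\infty/(B)_\infty$.
\item The summand is $\frac{(Ax/B)_n(b)_n}{(q)_n(x)_n}(B/b)^n$.
\item By the same limit as on the left-hand side, with $x$ replaced by $B$, $(b)_n(B/b)^n\to(-1)^nq^{n(n-1)/2}B^n$.
\end{itemize}
This is exactly the first right-hand side of the lemma.

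For the second equality, use \eqref{Heine3} with the same substitution.
\begin{itemize}
\item The prefactor is $(ABx/(bB))_\infty/(x/b)_\infty=(Ax/B)_\infty/(x/b)_\infty\to(Ax/B)_\infty$.
\item The summand is $\frac{(B/A)_n(B/b)_n}{(q)_n(B)_n}(Ax/B)^n\to\frac{(B/A)_n}{(q)_n(B)_n}(Ax/B)^n$.
\end{itemize}
This gives the second line of the lemma.

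The main obstacle is justifying the interchange of the limit $b\to\infty$ with the infinite sums. This is done by dominated convergence, taking $b$ along a sequence with $|b|\to\infty$ and avoiding the poles.

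\begin{itemize}
\item \textbf{Left side and \eqref{Heine2}.} Bound $|(b)_n(x/b)^n|\le\prod_j(|x|/|b|+|x|q^j)$ for $|b|\ge1$. This is at most $C^n|q|^{n(n-1)/2}$ uniformly in $b$, which is summable. The same bound handles $(b)_n(B/b)^n$ in \eqref{Heine2}.
\item \textbf{\eqref{Heine3}.} The summands are uniformly bounded by a geometric series once $|Ax/B|<1$.
\item \textbf{Extension.} For general parameters, extend by analytic continuation in $x$, since both sides are analytic where defined.
\end{itemize}
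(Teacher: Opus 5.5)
Your argument is the paper's proof: the paper also sets $a=A$, $c=B$, $b=\rho$, $z=x/\rho$ in \eqref{Heine2} and \eqref{Heine3} and lets $\rho\to\infty$, and your limit computations are correct, so the detour through \eqref{eq:Heine1} can simply be dropped. The one slip is in the convergence justification you add (the paper omits it entirely): the bound $C^n|q|^{n(n-1)/2}$ cannot hold uniformly in $b$, because for fixed $b$ the terms $(b)_n(x/b)^n$ decay only geometrically, like $|x/b|^n$; instead, for $|b|\ge 2\max(|x|,|B|)$ your product estimate gives the uniform bound $2^{-n}\prod_{j\ge0}(1+2|x||q|^j)$ (and the same with $B$ in place of $x$), which suffices for dominated convergence.
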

\begin{proof}
 We let $a=A,\,b=\rho,\,z\to z/\rho,\,c=B$ and $\rho\to\infty$ and set $z=x$ in \eqref{Heine2} to get the first equality and use the same substitutions in \eqref{Heine3} to get the second equality.
\end{proof}
We then state a finite version of $q$-binomial theorem from \cite[p. $36$, $(3.3.6)$]{A}, which is as follows.
\begin{equation}\label{eq:FQbinom}
(z;q)_{n}=\sum_{k=0}^{n}(-1)^kz^{k}q^{\binom{k}{2}}{n\brack k}_{q}.
\end{equation}
We next state Watson's $q$-analogue of Whipple's theorem \cite[p. $81$, $(4.1.3)$]{AndrewsBerndt2009}. If $\alpha$, $\beta$, $\gamma$, $\delta$, and $\epsilon$ are complex numbers such that $\beta\gamma\delta\epsilon\neq 0$, and if $N$ is any non-negative integer, then
\begin{align*}
\sum\limits_{n=0}^{\infty}&\dfrac{(\alpha,q\sqrt{\alpha},-q\sqrt{\alpha},\beta,\gamma,\delta,\epsilon,q^{-N};q)_n}{(q,\sqrt{\alpha},-\sqrt{\alpha},\frac{\alpha q}{\beta},\frac{\alpha q}{\gamma},\frac{\alpha q}{\delta},\frac{\alpha q}{\epsilon},\alpha q^{N+1};q)_n}\left(\frac{\alpha^2q^{N+2}}{\beta\gamma\delta\epsilon}\right)^n\\
&= \dfrac{(\alpha q,\frac{\alpha q}{\delta\epsilon};q)_N}{(\frac{\alpha q}{\delta},\frac{\alpha q}{\epsilon};q)_N}\sum\limits_{n=0}^{\infty}\dfrac{(\frac{\alpha q}{\beta\gamma},\delta,\epsilon,q^{-N};q)_n}{(q,\frac{\alpha q}{\beta},\frac{\alpha q}{\gamma},\frac{\delta\epsilon q^{-N}}{\alpha};q)_n}q^n.\numberthis\label{eq23}
\end{align*}
Let $\gamma,\epsilon$, and $N$ all go to infinity, we get a handy identity as follows.
\begin{equation}\label{eq:mastersubs}
\sum_{n\geq0}\frac{(\alpha,q\sqrt{\alpha},-q\sqrt{\alpha},\beta,\delta;q)_{n}}{(q,\sqrt{\alpha},-\sqrt{\alpha},\frac{\alpha q}{\beta},\frac{\alpha q}{\delta};q)_n}\left(-\frac{\alpha^2}{\beta\delta}\right)^nq^{\frac{n(3n+1)}{2}}=\frac{(\alpha q;q)_{\infty}}{(\frac{\alpha q}{\delta};q)_{\infty}}\sum_{n\geq0}\frac{(\delta;q)_{n}}{(q,\frac{\alpha q}{\beta};q)_{n}}\left(-\frac{\alpha}{\delta}\right)^nq^{\frac{n(n+1)}{2}}
\end{equation}

\section{Strict Partition Classes and G\"ollnitz--Gordon Type Companions}\label{sec:strictGG}

\subsection{Strict overpartitions and Slater's identity (12)}\label{sec:slater12}

Let $\overline{\mathcal{S}}$ be the set of partitions satisfying the following conditions.
\begin{enumerate}
    \item $\lambda$ is a strict partition;
    \item $\lambda_i$ may be overlined if $\lambda_{i}\geq2$ and $\lambda_{i}-\lambda_{i+1}\geq2$.
\end{enumerate}
We call partitions in $\overline{\mathcal{S}}$ strict overpartitions, and let $\overline{\#}(\lambda)$ be the number of overlined parts in $\lambda$. The following refined generating function was established in the first paper of this series \cite{DGLSIPI}.
\begin{theorem}[Dhar--Goswami--Li \cite{DGLSIPI}]\label{thm:GFStrictOverpartition}We have
\begin{align*}
\sum_{\lambda\in\overline{\mathcal{S}}}a^{\overline{\#}(\lambda)}z^{\#(\lambda)}q^{|\lambda|}=\sum_{n=0}^{\infty}\frac{(-aq;q)_{n}}{(q;q)_{n}}z^nq^{\frac{n(n+1)}{2}}=(-aq)_\infty (-zq)_\infty\sum_{n\ge 0}\dfrac{(-aq)^n}{(q)_n(-zq)_n}.\numberthis\label{eq31}
\end{align*}
\end{theorem}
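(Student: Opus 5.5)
We prove the first equality by exhibiting an explicit SIP basis of modulus $1$ for $\overline{\mathcal S}$, and the second by expanding the product side with Euler's identities and interchanging summations.

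\emph{Step 1: the basis.} For fixed $n$, take as basis elements the strict overpartitions $(b_1,\dots,b_n)$ determined by a subset $J\subseteq\{1,\dots,n\}$ of overlined positions, with $b_{n+1}:=0$ and
\[
b_i-b_{i+1}=1+[i\in J]\qquad (1\le i\le n).
\]
Thus $b_i=(n-i+1)+\#\{j\in J: j\ge i\}$. Overlining position $j$ raises $b_1,\dots,b_j$ by one, so it adds exactly $j$ to the size. Hence
\[
\sum_{J} a^{|J|}q^{|b|}=q^{\binom{n+1}{2}}\prod_{j=1}^n(1+aq^j)=q^{\frac{n(n+1)}{2}}(-aq;q)_n .
\]
Each such $b$ lies in $\overline{\mathcal S}$. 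For an overlined $b_i$ with $i<n$, the gap $b_i-b_{i+1}$ is $2$ by construction, and $b_i\ge b_{i+1}+2\ge 2$. For $i=n$, the condition $\lambda_n\ge2$ is exactly the gap condition with $\lambda_{n+1}=0$.

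\emph{Step 2: unique decomposition.} Given $\lambda\in\overline{\mathcal S}$ with $n$ parts, let $J$ be its set of overlined positions and put $g_i=\lambda_i-\lambda_{i+1}$, with $\lambda_{n+1}=0$. The defining conditions say precisely that $g_i\ge 1+[i\in J]$; here the requirement $\lambda_i\ge2$ is automatic for $i<n$ and is the gap condition for $i=n$. Set $\pi_i-\pi_{i+1}=g_i-1-[i\in J]\ge0$ with $\pi_{n+1}=0$. Then $\pi$ is a partition into $n$ nonnegative parts with $\lambda=b+\pi$, where $b$ is the basis element for $J$, and $\lambda$ carries the overlines of $b$. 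Conversely, adding any such $\pi$ to a basis element only enlarges gaps, so the result lies in $\overline{\mathcal S}$ with the same overlined positions. The decomposition is clearly unique. Since $\pi$ is generated by $1/(q;q)_n$ and $z$ records $n$, \eqref{eq:sip-general-gf} gives the first equality.

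\emph{Step 3: the product form.} Rather than specializing Lemma~\ref{lem1}, which needs a degenerate $B\to0$ limit, I expand the right-hand side directly. By Euler's identity,
\[
\frac{(-zq)_\infty}{(-zq)_n}=(-zq^{n+1})_\infty=\sum_{m\ge0}\frac{z^mq^{\frac{m(m+1)}{2}+nm}}{(q)_m}.
\]
Substituting and interchanging the sums (legitimate by absolute convergence for $|q|<1$), the sum over $n$ becomes
\[
\sum_{n\ge0}\frac{(-aq^{m+1})^n}{(q)_n}=\frac{1}{(-aq^{m+1})_\infty}.
\]
Multiplying by $(-aq)_\infty$ gives $(-aq)_m$. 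We therefore recover $\sum_m (-aq)_m z^m q^{m(m+1)/2}/(q)_m$, which is the middle expression.

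The only step needing care is Step 2, namely checking that the boundary condition $\lambda_n\ge2$ for an overlined last part matches the gap convention $\lambda_{n+1}=0$. The analytic part is routine.
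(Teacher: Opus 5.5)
Your proof is correct. This paper does not reprove the statement; it quotes it from the first paper of the series. The natural comparison is therefore with the machinery the paper uses for the analogous classes $\mathcal S_1$ and $\mathcal S_2$.

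Your basis is the expected staircase basis: gaps $1$, and gaps $2$ at overlined positions, with the convention $\lambda_{n+1}=0$ absorbing the condition $\lambda_n\ge 2$. Where you differ is in evaluating its generating function in one stroke. Overlining position $j$ independently adds $j$ to the size, so the basis contributes $q^{n(n+1)/2}\prod_{j=1}^{n}(1+aq^j)$. The paper's template (Theorems~\ref{thm:S1-recurrence}--\ref{thm:S1} and \ref{thm:S2-recurrence}--\ref{thm:S2-gf}) instead refines by the largest part $h$. It proves a two-term recurrence in $(n,h)$, checks a Gaussian-polynomial closed form (here it would read $B(n,n+h)=a^hq^{\binom{n+1}{2}+\binom{h+1}{2}}{n\brack h}_q$), and sums over $h$ with the finite $q$-binomial theorem \eqref{eq:FQbinom}. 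Your route is shorter and shows $(-aq;q)_n$ directly as a product of independent local choices, which is the Joichi--Stanton-type mechanism described in Section~\ref{sec:applications}. The recurrence route is longer but keeps the largest part as an extra statistic.

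For the product side, your Euler double-sum computation is in effect the classical proof of Heine's transformation carried out in a special case. The identity is \eqref{eq:Heine1} with $c=0$ and $b=-aq$, with Heine's $(a,z)$ replaced by $(\rho,-zq/\rho)$, and $\rho\to\infty$. Doing it directly is self-contained and avoids justifying that limit.

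One correction: the interchange of summations needs $|aq|<1$, not merely $|q|<1$. Alternatively, read the identity in $\mathbb{Z}[a,z][[q]]$, where the interchange is automatic because the $(n,m)$ term has $q$-order at least $n+m(m+1)/2$. This matters in practice. At the specialization $a=q^{-1}$ used later for Slater's identity (12), the right-hand series becomes $\sum_n(-1)^n/(q^2;q^2)_n$, which diverges, so that step must be understood as a limit $a\to q^{-1}$.
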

We now recover Slater's identity (12).
\begin{theorem}\label{thm:Slater12} We have
\begin{align*}
\sum\limits_{n=0}^{\infty}\frac{(-1;q)_nq^{\frac{n(n+1)}{2}}}{(q;q)_n}=\frac{(-q;q)_{\infty}(q^2,q^2,q^4;q^4)_{\infty}}{(q;q)_{\infty}}.\numberthis\label{eq32}
\end{align*}
\end{theorem}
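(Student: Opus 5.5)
We specialize the refined generating function of Theorem~\ref{thm:GFStrictOverpartition} and then evaluate the transformed side with Euler's identity. In \eqref{eq31} we set $z=1$ and $a=q^{-1}$. Then $(-aq;q)_n=(-1;q)_n$, so the middle member of \eqref{eq31} becomes exactly the left-hand side of \eqref{eq32}.

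Both equalities in \eqref{eq31} are identities between functions analytic in $a$ for $|q|<1$. They follow from Lemma~\ref{lem1}, which is pure Heine. So the specialization $a=q^{-1}$ is legitimate analytically, even though it has no direct combinatorial meaning. Indeed, at $a=q^{-1}$ the right-hand series converges absolutely, since its terms are $O(1/(q)_n(-q)_n)$, which stays bounded. This point, justifying the substitution, is the only mildly delicate step. I expect it to be harmless, and I would cite the analytic derivation through Lemma~\ref{lem1} rather than the combinatorial one.

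After the substitution the right-hand member of \eqref{eq31} is
\[
(-1;q)_\infty(-q;q)_\infty\sum_{n\ge0}\frac{(-1)^n}{(q;q)_n(-q;q)_n}
=(-1;q)_\infty(-q;q)_\infty\sum_{n\ge0}\frac{(-1)^n}{(q^2;q^2)_n}.
\]
By Euler's identity $\sum_{n\ge0}x^n/(q^2;q^2)_n=1/(x;q^2)_\infty$ with $x=-1$, the sum equals $1/(-1;q^2)_\infty$. Using $(-1;q)_\infty=2(-q;q)_\infty$ and $(-1;q^2)_\infty=2(-q^2;q^2)_\infty$, the right-hand member becomes
\[
\frac{(-q;q)_\infty^2}{(-q^2;q^2)_\infty}=(-q;q)_\infty(-q;q^2)_\infty .
\]

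It remains to match this with the product in \eqref{eq32}, that is, to verify
\[
(-q;q^2)_\infty=\frac{(q^2;q^4)_\infty^2(q^4;q^4)_\infty}{(q;q)_\infty}.
\]
This follows from the factorization $(q;q)_\infty=(q;q^2)_\infty(q^2;q^4)_\infty(q^4;q^4)_\infty$. It reduces the right side to $(q^2;q^4)_\infty/(q;q^2)_\infty$, which equals $(-q;q^2)_\infty$ because $(q^2;q^4)_\infty=(q;q^2)_\infty(-q;q^2)_\infty$. This completes the plan; every step except the substitution is routine product manipulation.
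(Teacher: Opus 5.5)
Your route is the same as the paper's: specialize $a=q^{-1}$, $z=1$ in \eqref{eq31}, use $(q;q)_n(-q;q)_n=(q^2;q^2)_n$, sum by Euler's (Cauchy's) identity, and rearrange products. Your product bookkeeping at the end is correct. What is wrong is the justification you give for the step you yourself flagged as delicate.

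At $a=q^{-1}$, $z=1$ the right-hand series is $\sum_{n\ge0}(-1)^n/(q^2;q^2)_n$. Its terms tend to $\pm 1/(q^2;q^2)_\infty\neq 0$. Bounded terms do not give convergence, and this series in fact diverges. Correspondingly, Euler's identity $\sum_{n\ge0}x^n/(q^2;q^2)_n=1/(x;q^2)_\infty$ holds only for $|x|<1$, and you apply it at $x=-1$. Your claim that both equalities in \eqref{eq31} are identities of functions analytic in $a$ is also not right as stated. The right-hand member of \eqref{eq31}, as a series, converges only for $|aq|<1$. Moreover, the first equality is the combinatorial generating function, not a consequence of Lemma~\ref{lem1}.

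The repair is short. Keep $a$ free with $|aq|<1$ and $z=1$; then Euler's identity legitimately gives
\[
\sum_{n\ge0}\frac{(-aq;q)_nq^{n(n+1)/2}}{(q;q)_n}=\frac{(-aq;q)_\infty(-q;q)_\infty}{(-aq;q^2)_\infty}=(-q;q)_\infty(-aq^2;q^2)_\infty .
\]
Both sides are entire in $a$; the left side is entire because the factor $q^{n(n+1)/2}$ makes the series converge uniformly on compact $a$-sets. So the identity holds for all $a$. Setting $a=q^{-1}$ gives $(-q;q)_\infty(-q;q^2)_\infty$, and your product manipulation then finishes the proof. The paper makes the same substitution without comment, so with this fix your argument is more careful than the published one.
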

\begin{proof}
Letting $a=1/q$ and $z=1$ in \eqref{eq32}, we get
\begin{align*}
\sum\limits_{n=0}^{\infty}\frac{(-1;q)_nq^{\frac{n(n+1)}{2}}}{(q;q)_n} &= (-1;q)_{\infty}(-q;q)_{\infty}\sum\limits_{n=0}^{\infty}\frac{(-1)^n}{(q;q)_n(-q;q)_n}\\
&= (-1;q)_{\infty}(-q;q)_{\infty}\sum\limits_{n=0}^{\infty}\frac{(-1)^n}{(q^2;q^2)_n}.\numberthis\label{eq33}
\end{align*}
Using Cauchy's identity \cite[p. $17$, $(2.2.1)$]{A} on the right-hand side of \eqref{eq33}, we obtain
\begin{align*}
\sum\limits_{n=0}^{\infty}\frac{(-1;q)_nq^{\frac{n(n+1)}{2}}}{(q;q)_n} &= \frac{(-1;q)_{\infty}(-q;q)_{\infty}}{(-1;q^2)_{\infty}}\\
&= \frac{(-q;q)^2_{\infty}}{(-q^2;q^2)_{\infty}}\\
&= \frac{(-q;q)_{\infty}(q^2;q^4)_{\infty}}{(q;q^2)_{\infty}}\\
&= \frac{(-q;q)_{\infty}(q^2;q^4)_{\infty}(q^2;q^2)_{\infty}}{(q;q^2)_{\infty}(q^2;q^2)_{\infty}}\\
&= \frac{(-q;q)_{\infty}(q^2,q^2,q^4;q^4)_{\infty}}{(q;q)_{\infty}}.\\
\end{align*}
This completes the proof.
\end{proof}

\subsection{A shifted mod-4 class and new companion identities}\label{sec:ngg}
The following identity is Equation $(25)$ in Slater's list, and was treated in \cite{DGLSIPI}.
\begin{equation}\label{eq:NGG1}
\sum_{n=0}^{\infty}\frac{(-q;q^2)_{n}q^{n^2}}{(q^4;q^4)_{n}}
=\dfrac{(q^3;q^6)^2_\infty(q^6;q^6)_\infty(-q;q^2)_\infty}{(q^2;q^2)_\infty}.
\end{equation}
Here we will prove the following companions, which are not included in Slater's list.
\begin{equation}\label{eq:NGG2}
\sum_{n=0}^{\infty}\frac{(-q;q^2)_{n}q^{n^2+2n}}{(q^4;q^4)_{n}}
=\frac{(q^6;q^{12})_{\infty}}{(q^3,q^4,q^8,q^9;q^{12})_{\infty}}, 
\end{equation}
\begin{equation}
\sum_{n=0}^{\infty}\frac{(-1)^n(q;q^2)_nq^{n^2+2n}}{(q^4;q^4)_n}
=\frac{(q^3,q^9;q^{12})_{\infty}}{(q^4,q^8;q^{12})_{\infty}}.
\end{equation}
Recall that the G\"ollnitz-Gordon identities, also Equation $(36)$ and Equation $(34)$ in Slater's list, are stated as
\begin{equation}\label{eq:GG1}
\sum_{n=0}^{\infty}\frac{(-q;q^2)_nq^{n^{2}}}{(q^2;q^2)_n}=\frac{1}{(q,q^4,q^7;q^8)_{\infty}},
\end{equation}
\begin{equation}
\sum_{n=0}^{\infty}\frac{(-q;q^2)_nq^{n^{2}+2n}}{(q^2;q^2)_n}=\frac{1}{(q^3,q^4,q^5;q^8)_{\infty}}.
\end{equation}
Due to the similarity of the sum sides, we call \eqref{eq:NGG1} and \eqref{eq:NGG2} the new G\"ollnitz-Gordon identities.

Let $\mathcal{P}_4$ be the set of strict partitions satisfying the following gap conditions.
\begin{enumerate}
    \item $\lambda_i-\lambda_{i+1}\equiv 2\pmod{4}$ if they are both odd;
    \item $\lambda_i-\lambda_{i+1}\equiv 0\pmod{4}$ if they are both even;
    \item $\lambda_{i}-\lambda_{i+1}\equiv 3\pmod{4}$ if they have different parity;
    \item $\lambda_{i}\equiv 1\ \text{or}\ 2\pmod{4}$ if it is the smallest part.
\end{enumerate}
Let $o(\lambda)$ and $e(\lambda)$ denote the numbers of odd and even parts of $\lambda$, respectively. The following refinement was proved in \cite{DGLSIPI}.
\begin{theorem}[Dhar--Goswami--Li \cite{DGLSIPI}]\label{thm:P4Gen}
\begin{align*}
\sum_{\lambda\in\mathcal{P}_4}x^{o(\lambda)}y^{e(\lambda)}q^{|\lambda|} &= \sum_{n=0}^{\infty}\frac{(-yq/x;q^2)_{n}x^{n}q^{n^2}}{(q^4;q^4)_{n}}\\
&= \frac{(-xq;q^2)_{\infty}}{(yq^2;q^2)_{\infty}}
\Biggl(1+(1+y)\sum_{n=1}^{\infty}
\frac{(-1)^n(1-yq^{4n})(y^2q^4;q^4)_{n-1}}{(q^4;q^4)_n}\\
&\hspace{3.5cm}\times
\frac{(-yq/x;q^2)_n x^n q^{3n^2}}{(-xq;q^2)_n}\Biggr).
\numberthis\label{eq:P4eq1}
\end{align*}
\end{theorem}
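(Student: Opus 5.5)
The statement has two parts: a combinatorial SIP identity, and an analytic transformation of the resulting series. I would prove them separately.

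\textbf{First equality (SIP basis of modulus $4$).} I would show that $\mathcal P_4$ is an SIP class of modulus $4$ and identify its basis. Fix $n$ and read a partition in $\mathcal P_4$ from its smallest part upward. Conditions (1)--(4) are preserved when multiples of $4$ are removed, in the sense that only the residues mod $4$ of the gaps and of the smallest part matter. Hence the parity pattern of $\lambda$ (which positions carry even parts) determines a unique minimal configuration $b$. In $b$ we take the smallest part to be $1$ or $2$. Each gap is the least positive value in the prescribed residue class: $2$ for odd--odd, $4$ for even--even (the value $0$ is excluded by strictness), and $3$ for mixed parity.

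The all-odd basis element is $(2n-1,\dots,3,1)$, with weight $x^nq^{n^2}$. Making the part at position $j$ from the bottom even replaces one factor $x$ by $y$ and adds exactly $2(n-j)+1$ to the size. Checking small cases confirms this: for $n=2$ the basis elements $(3,1),(4,1),(5,2),(6,2)$ have sizes $4,5,7,8$. Since the positions $j$ give the odd numbers $1,3,\dots,2n-1$, the contributions combine to
\[
B(n;q)=x^nq^{n^2}(-yq/x;q^2)_n .
\]
Uniqueness and surjectivity of the decomposition $\lambda=b+\pi$ follow from minimality of the basis gaps within their residue classes. Indeed, $\lambda_i-\lambda_{i+1}-(b_i-b_{i+1})$ and $\lambda_n-b_n$ are then nonnegative multiples of $4$, so $\pi$ is a partition into multiples of $4$. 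Then \eqref{eq:sip-general-gf} gives the first equality.

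\textbf{Second equality (analytic transformation).} I would work in base $q^2$ and apply \eqref{eq:mastersubs} with
\[
\alpha=y,\qquad \delta=-y,\qquad \beta=-yq/x .
\]
With these choices the pieces of the left side of \eqref{eq:mastersubs} match the target as follows.
\begin{itemize}
\item The denominator $(q^2,\alpha q^2/\delta;q^2)_n$ becomes $(q^2,-q^2;q^2)_n=(q^4;q^4)_n$.
\item The quotient $(\alpha,\delta;q^2)_n/(1-\alpha)$ times the very-well-poised factor $(1-\alpha q^{4n})$ becomes $(1+y)(y^2q^4;q^4)_{n-1}(1-yq^{4n})$ for $n\ge1$.
\item The ratio $(\beta;q^2)_n/(\alpha q^2/\beta;q^2)_n$ becomes $(-yq/x;q^2)_n/(-xq;q^2)_n$.
\item The power factor $(-\alpha^2/(\beta\delta))^n q^{3n^2+n}$ becomes $(-1)^nx^nq^{3n^2}$.
\end{itemize}
Thus the bracketed sum in \eqref{eq:P4eq1} equals
\[
\frac{(yq^2;q^2)_\infty}{(-q^2;q^2)_\infty}\sum_{n\ge0}\frac{(-y;q^2)_n\,q^{n^2+n}}{(q^2;q^2)_n(-xq;q^2)_n}.
\]

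It remains to identify this with the first series, after multiplying by $(-xq;q^2)_\infty/(yq^2;q^2)_\infty$. For this I apply the first identity of Lemma~\ref{lem1} in base $q^2$ with
\[
A=-yq/x,\qquad B=-q^2,\qquad \text{and the lemma's variable } x \text{ replaced by } -xq .
\]
Then $(q^2;q^2)_n(B;q^2)_n=(q^4;q^4)_n$, and $(-1)^n(-xq)^nq^{n(n-1)}=x^nq^{n^2}$. The lemma transforms the first series into
\[
\frac{(-xq;q^2)_\infty}{(-q^2;q^2)_\infty}\sum_{n\ge0}\frac{(-y;q^2)_n\,q^{n^2+n}}{(q^2;q^2)_n(-xq;q^2)_n},
\]
where we used $AX/B=-y$ and $(-1)^nq^{n(n-1)}B^n=q^{n^2+n}$. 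This is the same sum, which proves the second equality.

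The main obstacle is spotting the parameter choice in \eqref{eq:mastersubs}. I found it by reading off $\alpha=y$ from the factor $(1-yq^{4n})$, and then forcing $(q^4;q^4)_n$ in the denominator, which gives $\delta=-y$. The remaining step to check carefully is the bookkeeping of signs and of the $n=0$ term in the well-poised factor.
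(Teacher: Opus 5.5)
The paper does not reprove this result (it cites \cite{DGLSIPI}), but your argument is correct and uses the same tools the paper relies on. For the first equality you give a modulus-$4$ SIP decomposition; for the second you combine the Heine limit of Lemma~\ref{lem1} (with $A=-yq/x$, $B=-q^2$) with the Watson limit \eqref{eq:mastersubs} at $\alpha=y$, $\delta=-y$, $\beta=-yq/x$ in base $q^2$, just as in the proof of Corollary~\ref{cornew}, and all the parameter matchings check out.

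The one step you should prove rather than test on small cases is that the size increments $2(n-j)+1$ are additive. Let $\epsilon_j=1$ exactly when the $j$th part from the bottom is even. The basis has smallest part $1+\epsilon_1$ and gaps $2+\epsilon_j+\epsilon_{j+1}$, so $b_k=2k-1+\epsilon_k+2\sum_{j<k}\epsilon_j$, which gives the additivity immediately.
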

We now prove the new G\"ollnitz--Gordon companion identity to Slater's identity (25).
\begin{theorem}
\begin{align*}
\sum_{n=0}^{\infty}\frac{(-q;q^2)_{n}q^{n^2+2n}}{(q^4;q^4)_{n}}=\frac{(q^6;q^{12})_{\infty}}{(q^3,q^4,q^8,q^9;q^{12})_{\infty}}.\numberthis\label{eq:P4eq2}
\end{align*}
\end{theorem}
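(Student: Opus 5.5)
The plan is to specialize the refined generating function of Theorem~\ref{thm:P4Gen} and then evaluate the resulting theta-type series with Jacobi's triple product identity. Setting $x=y=q^2$ in \eqref{eq:P4eq1} gives $-yq/x=-q$ and $x^nq^{n^2}=q^{n^2+2n}$, so the left-hand side of \eqref{eq:P4eq1} becomes exactly the series in \eqref{eq:P4eq2}. The right-hand side becomes
\[
\frac{(-q^3;q^2)_\infty}{(q^4;q^2)_\infty}\Biggl(1+(1+q^2)\sum_{n\ge1}\frac{(-1)^n(1-q^{4n+2})(q^8;q^4)_{n-1}}{(q^4;q^4)_n}\cdot\frac{(-q;q^2)_n\,q^{3n^2+2n}}{(-q^3;q^2)_n}\Biggr).
\]

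Next I simplify the summand, using three elementary facts:
\[
\frac{(q^8;q^4)_{n-1}}{(q^4;q^4)_n}=\frac{1}{1-q^4},\qquad
\frac{(-q;q^2)_n}{(-q^3;q^2)_n}=\frac{1+q}{1+q^{2n+1}},\qquad
1-q^{4n+2}=(1-q^{2n+1})(1+q^{2n+1}).
\]
Since $(1+q)(1+q^2)/(1-q^4)=1/(1-q)$, the bracket collapses to
\[
1+\frac{1}{1-q}\sum_{n\ge1}(-1)^nq^{3n^2+2n}(1-q^{2n+1}).
\]
The $n=0$ term of this last sum would be $1-q$, so the bracket equals $\frac{1}{1-q}\sum_{n\ge0}(-1)^nq^{3n^2+2n}(1-q^{2n+1})$.

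The key step is to recognize this as a bilateral theta series. In the negative piece $-(-1)^nq^{3n^2+4n+1}$, I substitute $n\mapsto -n-1$; since $3(n+1)^2-2(n+1)=3n^2+4n+1$, this turns it into the $n\le -1$ terms of $\sum_{n\in\mathbb Z}(-1)^nq^{3n^2+2n}$. Jacobi's triple product then evaluates this bilateral sum as $(q^2,q^4,q^6;q^6)_\infty$. For the prefactor, $(-q^3;q^2)_\infty/\bigl((1-q)(q^4;q^2)_\infty\bigr)=(-q;q^2)_\infty/(q^2;q^2)_\infty$. Altogether,
\[
\sum_{n\ge0}\frac{(-q;q^2)_nq^{n^2+2n}}{(q^4;q^4)_n}=\frac{(-q;q^2)_\infty(q^2,q^4,q^6;q^6)_\infty}{(q^2;q^2)_\infty}.
\]

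It remains to convert this product into the form stated in \eqref{eq:P4eq2}. I would write $(-q;q^2)_\infty=(q^2;q^4)_\infty/(q;q^2)_\infty$ and $(q^2,q^4,q^6;q^6)_\infty=(q^2;q^2)_\infty/(q^6;q^{12})_\infty\cdot$(appropriate mod-$12$ factors). Then I would cancel everything down to residues modulo $12$ and check that the result is $(q^6;q^{12})_\infty/(q^3,q^4,q^8,q^9;q^{12})_\infty$. I expect this bookkeeping to be the only mildly delicate step. The main conceptual obstacle is seeing the cancellation $1-q^{4n+2}=(1-q^{2n+1})(1+q^{2n+1})$ and the reflection $n\mapsto -n-1$, and these are already identified above.
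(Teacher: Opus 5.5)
Your route is the paper's: specialize $x=y=q^2$ in \eqref{eq:P4eq1}, collapse the summand using $(1-q^{4n+2})=(1-q^{2n+1})(1+q^{2n+1})$, reflect $n\mapsto -n-1$, and apply Jacobi's triple product. Every step up to the bilateral sum $\sum_{n\in\mathbb Z}(-1)^nq^{3n^2+2n}$, including the prefactor $(-q;q^2)_\infty/(q^2;q^2)_\infty$, is correct.

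\textbf{The gap is the triple-product evaluation.} From $\sum_{n}z^nq^{n^2}=(-zq,-q/z,q^2;q^2)_\infty$ with $q\to q^3$ and $z=-q^2$ one gets
\[
\sum_{n\in\mathbb Z}(-1)^nq^{3n^2+2n}=(q,q^5,q^6;q^6)_\infty,
\]
not $(q^2,q^4,q^6;q^6)_\infty$. The product you wrote is $\sum_n(-1)^nq^{3n^2+n}$, which by Euler's pentagonal theorem in $q^2$ equals $(q^2;q^2)_\infty$ exactly. Your displayed intermediate identity therefore collapses to $\sum_{n\ge0}\frac{(-q;q^2)_nq^{n^2+2n}}{(q^4;q^4)_n}=(-q;q^2)_\infty$. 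This is false: the left side is $1+q^3+q^4+q^7+2q^8+\cdots$ and has no $q^1$ term. As a result, the final ``bookkeeping'' step you deferred cannot be carried out. There are no leftover mod-$12$ factors that could produce $(q^6;q^{12})_\infty$ in the numerator or $(q^3,q^9;q^{12})_\infty$ in the denominator.

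With the correct theta product the conversion goes through. Use $(-q;q^2)_\infty=(q^2;q^4)_\infty/(q;q^2)_\infty$, $(q,q^5;q^6)_\infty=(q;q^2)_\infty/(q^3;q^6)_\infty$, and $(q^2;q^2)_\infty=(q^2;q^4)_\infty(q^4;q^4)_\infty$ to get
\[
\frac{(-q;q^2)_\infty(q,q^5,q^6;q^6)_\infty}{(q^2;q^2)_\infty}
=\frac{(q^6;q^6)_\infty}{(q^3;q^6)_\infty(q^4;q^4)_\infty}.
\]
Now split $(q^6;q^6)_\infty=(q^6,q^{12};q^{12})_\infty$, $(q^3;q^6)_\infty=(q^3,q^9;q^{12})_\infty$, and $(q^4;q^4)_\infty=(q^4,q^8,q^{12};q^{12})_\infty$. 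This gives exactly the right-hand side of \eqref{eq:P4eq2}, which is precisely the paper's argument.
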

\begin{proof}
Letting $x=y=q^2$ in \eqref{eq:P4eq1}, we get
\begin{align*}
\sum_{n=0}^{\infty}\frac{(-q;q^2)_{n}q^{n^2+2n}}{(q^4;q^4)_{n}} &= \frac{(-q^3;q^2)_{\infty}}{(q^4;q^2)_{\infty}}\left(1+(1+q^2)\sum\limits_{n=1}^{\infty}\frac{(-1)^n(1-q^{4n+2})(q^8;q^4)_{n-1}(-q;q^2)_nq^{3n^2+2n}}{(q^4;q^4)_n(-q^3;q^2)_n}\right)\\
&= \frac{(-q^3;q^2)_{\infty}}{(q^4;q^2)_{\infty}}\left(1+\frac{1}{1-q}\sum\limits_{n=1}^{\infty}(-1)^n(1-q^{2n+1})q^{3n^2+2n}\right)\\
&= \frac{(-q;q^2)_{\infty}}{(q^2;q^2)_{\infty}}\sum\limits_{n=0}^{\infty}(-1)^n(1-q^{2n+1})q^{3n^2+2n}\\
&= \frac{(-q;q^2)_{\infty}}{(q^2;q^2)_{\infty}}\sum\limits_{n=-\infty}^{\infty}(-1)^nq^{3n^2+2n}\\
&= \frac{(-q;q^2)_{\infty}(q,q^5,q^6;q^6)_{\infty}}{(q^2;q^2)_{\infty}}\numberthis\label{eq:P4eq3}\\
&= \frac{(q^6;q^{12})_{\infty}}{(q^3,q^4,q^8,q^9;q^{12})_{\infty}}
\end{align*}
where \eqref{eq:P4eq3} follows by replacing $q\rightarrow q^3$ and $z=-q^2$ in Jacobi's Triple Product Identity.
\end{proof}
Here we also give a partition interpretation for the series in \eqref{eq:P4eq2}. Let $\mathcal{P}'_4$ be the set of strict partitions such that each pair of consecutive parts satisfies the same restriction as in $\mathcal{P}_4$, and the smallest part is congruent to either $3$ or $0$ modulo $4$.
\begin{theorem}\label{thm:P4primeGF}
The generating function for $\mathcal{P}'_4$ is
\begin{equation}\label{eq:P4primeGF}
\sum_{\lambda\in\mathcal{P}'_4}x^{o(\lambda)}y^{e(\lambda)}q^{|\lambda|}=\sum_{n=0}^{\infty}\frac{(-yq/x;q^2)_{n}x^nq^{n^2+2n}}{(q^4;q^4)_{n}}.
\end{equation}
\end{theorem}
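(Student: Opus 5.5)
The plan is to deduce the statement directly from Theorem~\ref{thm:P4Gen} through a simple bijection $\mathcal{P}_4\to\mathcal{P}'_4$ that adds $2$ to every part. No new SIP basis computation should be needed.

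\emph{Step 1: the bijection.} Define $\phi(\lambda_1,\ldots,\lambda_n)=(\lambda_1+2,\ldots,\lambda_n+2)$ for $\lambda\in\mathcal{P}_4$.
\begin{itemize}
\item Adding the same constant to each part preserves strictness and every difference $\lambda_i-\lambda_{i+1}$.
\item Adding $2$ preserves parity, so conditions (1)--(3) on consecutive parts carry over unchanged.
\item The smallest part moves from residue $1$ or $2$ modulo $4$ to residue $3$ or $0$ modulo $4$, so $\phi(\lambda)\in\mathcal{P}'_4$.
\end{itemize}
Conversely, take $\mu\in\mathcal{P}'_4$. Its smallest part is $\equiv 3$ or $0 \pmod 4$ and positive, hence at least $3$. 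Subtracting $2$ from every part therefore gives positive parts, with smallest part $\equiv 1$ or $2\pmod 4$, and the gap conditions are again preserved. So $\phi$ is a bijection. It preserves $o(\lambda)$, $e(\lambda)$ and $\#(\lambda)$, and it satisfies $|\phi(\lambda)|=|\lambda|+2\#(\lambda)$.

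\emph{Step 2: reading off the length in the series of Theorem~\ref{thm:P4Gen}.} I need that the $n$-th summand
\[
\frac{(-yq/x;q^2)_n x^n q^{n^2}}{(q^4;q^4)_n}
\]
is exactly the generating function of those $\lambda\in\mathcal{P}_4$ with $\#(\lambda)=n$. Expanding $(-yq/x;q^2)_n$, each monomial has the form $x^{n-k}y^k$. This means the summand is homogeneous of total degree $n$ in $(x,y)$. Since $o(\lambda)+e(\lambda)=\#(\lambda)$, the total $(x,y)$-degree of a monomial records the number of parts. Hence the partitions contributing to the $n$-th summand are precisely those of length $n$.

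Equivalently, replacing $x\mapsto xz$ and $y\mapsto yz$ multiplies the $n$-th term by $z^n$ and leaves $yq/x$ unchanged. This gives
\[
\sum_{\lambda\in\mathcal{P}_4}x^{o(\lambda)}y^{e(\lambda)}z^{\#(\lambda)}q^{|\lambda|}.
\]

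\emph{Step 3: combining.} By Step 1,
\[
\sum_{\mu\in\mathcal{P}'_4}x^{o(\mu)}y^{e(\mu)}q^{|\mu|}=\sum_{\lambda\in\mathcal{P}_4}x^{o(\lambda)}y^{e(\lambda)}(q^2)^{\#(\lambda)}q^{|\lambda|}.
\]
By Step 2, this is the refined series with $z=q^2$, that is, with $x\mapsto xq^2$ and $y\mapsto yq^2$. That substitution leaves $(-yq/x;q^2)_n$ invariant and turns $x^n$ into $x^nq^{2n}$, which yields \eqref{eq:P4primeGF}.

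I expect the only genuinely delicate point to be Step 2, namely checking that the summation index in Theorem~\ref{thm:P4Gen} really counts the number of parts. The homogeneity argument settles this. Alternatively, one can recall from the SIP construction in \cite{DGLSIPI} that $\mathcal{P}_4$ is an SIP class of modulus $4$ in which the $n$-th term arises from basis partitions with $n$ parts divided by $(q^4;q^4)_n$; the same basis shifted by $2$ in each part then serves as a basis for $\mathcal{P}'_4$.
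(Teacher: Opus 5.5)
Your proposal is correct and follows the paper's proof. The paper also substitutes $x\to xq^2$, $y\to yq^2$ in Theorem~\ref{thm:P4Gen} and reads this as adding $2$ to every part, which gives a bijection $\mathcal{P}_4\to\mathcal{P}'_4$; you simply check the bijection more carefully than the paper does. Your Step~2 is valid but not needed: applying the substitution to the left-hand side directly produces the factor $q^{2(o(\lambda)+e(\lambda))}=q^{2\#(\lambda)}$.
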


\begin{proof}
Note that to get the right-hand side of \eqref{eq:P4primeGF}, we need to set $x\to xq^2$ and $y\to yq^2$ in \eqref{eq:P4eq1}. Since $x$ and $y$ keep track of the numbers of odd and even parts, respectively, this substitution means adding $2$ to each part of the partitions in $\mathcal{P}_4$. Consequently, we end up with a partition in $\mathcal{P}'_4$.
\end{proof}

\begin{remark}
It is straightforward to verify that $\mathcal{P}'_4$ is an SIP class of modulus $4$. Thus, by the same SIP argument used for $\mathcal{P}_4$ in \cite{DGLSIPI}, one can also prove Theorem~\ref{thm:P4primeGF} directly from the basis description.
\end{remark}
We conclude with the signed new G\"ollnitz--Gordon companion associated with $\mathcal{P}'_4$.
\begin{theorem}
\begin{equation}\label{eq:P4primeSigned}
\sum_{\lambda\in\mathcal{P}'_4}(-1)^{o(\lambda)}q^{|\lambda|}=\sum_{n=0}^{\infty}\frac{(q;q^2)_{n}(-1)^nq^{n^2+2n}}{(q^4;q^4)_{n}}=\frac{(q^3,q^9;q^{12})_{\infty}}{(q^4,q^8;q^{12})_{\infty}}.
\end{equation}
\end{theorem}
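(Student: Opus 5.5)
The plan mirrors the proof of \eqref{eq:P4eq2}: specialize Theorem~\ref{thm:P4primeGF} for the first equality, then specialize \eqref{eq:P4eq1} and finish with Jacobi's triple product identity.

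\textbf{First equality.} Put $x=-1$ and $y=1$ in \eqref{eq:P4primeGF}. The left side becomes $\sum_{\lambda\in\mathcal{P}'_4}(-1)^{o(\lambda)}q^{|\lambda|}$. On the right, $(-yq/x;q^2)_n=(q;q^2)_n$ and $x^n=(-1)^n$, which is exactly the middle series.

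\textbf{Reduction to a theta series.} Since \eqref{eq:P4primeGF} arose from \eqref{eq:P4eq1} via $x\to xq^2$, $y\to yq^2$, I will put $x=-q^2$ and $y=q^2$ in \eqref{eq:P4eq1}. The prefactor becomes $(q^3;q^2)_\infty/(q^4;q^2)_\infty$. In the summand, the factors simplify as follows:
\begin{itemize}
\item $(-yq/x;q^2)_n/(-xq;q^2)_n=(q;q^2)_n/(q^3;q^2)_n=(1-q)/(1-q^{2n+1})$;
\item $(y^2q^4;q^4)_{n-1}/(q^4;q^4)_n=(q^8;q^4)_{n-1}/(q^4;q^4)_n=1/(1-q^4)$;
\item $(1+y)/(1-q^4)=1/(1-q^2)$;
\item the two signs $(-1)^n$ and $x^n=(-1)^nq^{2n}$ cancel.
\end{itemize}
Using $(1-q^{4n+2})/(1-q^{2n+1})=1+q^{2n+1}$, the $n$-th term becomes
\[
\frac{1-q}{1-q^2}\,(1+q^{2n+1})\,q^{3n^2+2n}.
\]
At $n=0$ this equals $1$, so the leading $1$ is absorbed as the $n=0$ term. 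Moreover,
\[
\frac{(q^3;q^2)_\infty}{(q^4;q^2)_\infty}\cdot\frac{1-q}{1-q^2}=\frac{(q;q^2)_\infty}{(q^2;q^2)_\infty}.
\]
Hence the series equals
\[
\frac{(q;q^2)_\infty}{(q^2;q^2)_\infty}\sum_{n\ge0}(1+q^{2n+1})q^{3n^2+2n}.
\]
The substitution $n\mapsto -n-1$ sends $3n^2+2n$ to $3n^2+4n+1=3n^2+2n+(2n+1)$. So the two pieces combine into the bilateral sum $\sum_{n\in\mathbb Z}q^{3n^2+2n}$.

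\textbf{Product form.} Jacobi's triple product with $q\to q^3$ and $z=q^2$ gives
\[
\sum_{n\in\mathbb Z}q^{3n^2+2n}=(-q,-q^5,q^6;q^6)_\infty.
\]
It remains to simplify
\[
\frac{(q;q^2)_\infty(-q,-q^5,q^6;q^6)_\infty}{(q^2;q^2)_\infty}.
\]
Write $(q;q^2)_\infty=(q,q^3,q^5;q^6)_\infty$ and pair $(q;q^6)_\infty(-q;q^6)_\infty=(q^2;q^{12})_\infty$, and likewise for $q^5$. The numerator becomes $(q^2,q^{10};q^{12})_\infty(q^3;q^6)_\infty(q^6;q^6)_\infty$. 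Expanding $(q^2;q^2)_\infty$ modulo $12$ and cancelling leaves
\[
\frac{(q^3;q^6)_\infty}{(q^4,q^8;q^{12})_\infty}=\frac{(q^3,q^9;q^{12})_\infty}{(q^4,q^8;q^{12})_\infty},
\]
as claimed.

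The only delicate step is the reduction to a theta series: tracking the signs and the telescoping of the $q^4$-Pochhammer ratio correctly. The rest is routine product manipulation.
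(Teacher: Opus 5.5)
Your proposal is correct and follows essentially the same route as the paper: specialize $x=-q^2$, $y=q^2$ in \eqref{eq:P4eq1}, collapse the bracket (including the leading $1$ as the $n=0$ term) into the bilateral sum $\sum_{n\in\mathbb{Z}}q^{3n^2+2n}$, and apply Jacobi's triple product with $q\to q^3$, $z=q^2$. You also spell out two steps the paper leaves implicit: the first equality via Theorem~\ref{thm:P4primeGF} at $(x,y)=(-1,1)$, and the final mod-$12$ product simplification.
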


\begin{proof}
Set $x\to -q^2$ and $y\to q^2$ in \eqref{eq:P4eq1} to get
\begin{align*}
\sum\limits_{n=0}^{\infty}\frac{(q;q^2)_n(-1)^nq^{n^2+2n}}{(q^4;q^4)_n} &= \frac{(q^3;q^2)_{\infty}}{(q^4;q^2)_{\infty}}\left(1+\frac{1}{1+q}\sum\limits_{n=1}^{\infty}\left(q^{3n^2+2n}+q^{3n^2+4n+1}\right)\right)\\
&= \frac{(q;q^2)_{\infty}}{(q^2;q^2)_{\infty}}\sum\limits_{n=-\infty}^{\infty}q^{3n^2+2n}\\
&= \frac{(q;q^2)_{\infty}(-q,-q^5,q^6;q^6)_{\infty}}{(q^2;q^2)_{\infty}},
\end{align*}
where the last line above follows using Jacobi's Triple Product identity. Thus, we have
\begin{align*}
\sum\limits_{n=0}^{\infty}\frac{(q;q^2)_n(-1)^nq^{n^2+2n}}{(q^4;q^4)_n} = \frac{(q^3,q^9;q^{12})_{\infty}}{(q^4,q^8;q^{12})_{\infty}}.
\end{align*}
\end{proof}

It is also easy to see that replacing $q\rightarrow -q$ in \eqref{eq:P4eq2} gives us \eqref{eq:P4primeSigned} which then gives us the following new partition identity.
\begin{theorem}
We have
\begin{align*}
\sum\limits_{\lambda\in\mathcal{P}_4}(-1)^{|\lambda|}q^{|\lambda|+2\#(\lambda)} = \sum\limits_{\lambda\in\mathcal{P}'_4}(-1)^{o(\lambda)}q^{|\lambda|}.
\end{align*}
\end{theorem}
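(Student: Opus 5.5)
The plan is to compare the two sides through their series forms, using Theorem~\ref{thm:P4Gen} for the left side and \eqref{eq:P4primeSigned} for the right side.

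\textbf{Right side.} By \eqref{eq:P4primeSigned},
\[
\sum_{\lambda\in\mathcal{P}'_4}(-1)^{o(\lambda)}q^{|\lambda|}
=\sum_{n\ge0}\frac{(q;q^2)_n(-1)^nq^{n^2+2n}}{(q^4;q^4)_n}.
\]

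\textbf{Left side.} For $\lambda\in\mathcal{P}_4$ we have $(-1)^{|\lambda|}=(-1)^{o(\lambda)}$, since only the odd parts change the parity of $|\lambda|$. Hence
\[
(-1)^{|\lambda|}q^{|\lambda|+2\#(\lambda)}=(-q^2)^{o(\lambda)}(q^2)^{e(\lambda)}q^{|\lambda|}.
\]
So the left side is the refined generating function of Theorem~\ref{thm:P4Gen} with $x=-q^2$ and $y=q^2$. With this choice $yq/x=-q$, so $(-yq/x;q^2)_n=(q;q^2)_n$, and $x^nq^{n^2}=(-1)^nq^{n^2+2n}$. The left side therefore equals the same series as the right side, which proves the identity.

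\textbf{Alternative route.} One can also argue combinatorially. The map adding $2$ to every part is a bijection $\mathcal{P}_4\to\mathcal{P}'_4$, as in the proof of Theorem~\ref{thm:P4primeGF}. It preserves the parity of each part and sends $|\lambda|$ to $|\lambda|+2\#(\lambda)$. Thus
\[
\sum_{\lambda\in\mathcal{P}'_4}(-1)^{o(\lambda)}q^{|\lambda|}=\sum_{\lambda\in\mathcal{P}_4}(-1)^{o(\lambda)}q^{|\lambda|+2\#(\lambda)},
\]
and $(-1)^{o(\lambda)}=(-1)^{|\lambda|}$ finishes the argument.

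\textbf{Expected obstacle.} The only point needing care is checking that this bijection respects the gap conditions and the smallest-part conditions. The differences between consecutive parts are unchanged, and a smallest part $\equiv1,2\pmod 4$ becomes one $\equiv3,0\pmod 4$. The map is therefore bijective, and the obstacle is mild.
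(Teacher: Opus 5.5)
Your proof is correct. Your ``alternative route'' is exactly the paper's proof: adding $2$ to every part maps $\mathcal{P}_4$ bijectively onto $\mathcal{P}'_4$ and turns $|\lambda|$ into $|\lambda|+2\#(\lambda)$, and $(-1)^{|\lambda|}=(-1)^{o(\lambda)}$.

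Your main route instead matches the two sides as series. It specializes Theorem~\ref{thm:P4Gen} at $(x,y)=(-q^2,q^2)$ and uses the first equality of \eqref{eq:P4primeSigned}, which is Theorem~\ref{thm:P4primeGF} at $(x,y)=(-1,1)$. This is the series-level form of the paper's remark that $q\to -q$ in \eqref{eq:P4eq2} produces \eqref{eq:P4primeSigned}. It is valid, but it is not logically independent of the bijection. Theorem~\ref{thm:P4primeGF} is proved in the paper by precisely the add-$2$ map, unless you redo the SIP basis computation for $\mathcal{P}'_4$ as the Remark suggests. So the bijective argument is the more direct and self-contained proof.

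One small addition to your bijection check: surjectivity requires that subtracting $2$ keeps all parts positive. This holds because the smallest part of any $\mu\in\mathcal{P}'_4$ is a positive integer $\equiv 3$ or $0 \pmod 4$, hence at least $3$.
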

\begin{proof}
For any partition $\lambda\in\mathcal{P}_4$, $+2\#(\lambda)$ means adding $2$ to each part of $\lambda$ which produces a partition in $\mathcal{P}'_4$ and the parity of $|\lambda|$ is the same as the parity of $o(\lambda)$, hence the result.
\end{proof}

\section{Even-position overpartitions and identities (28), (50), and (51)}\label{sec:S1}
In this section, we treat Equation $(28)$, $(50)$, and $(51)$ in Slater's list. They are given by
\begin{equation}\label{eq:Slater28}
\sum_{n=0}^{\infty}\frac{(-q^2;q^2)_{n}q^{n(n+1)}}{(q;q)_{2n+1}}
=\dfrac{(-q^2;q^2)_{\infty}(-q,-q^5,q^6;q^6)_{\infty}}{(q^2;q^2)_{\infty}},
\end{equation}
\begin{equation}\label{eq:Slater50}
\sum_{n=0}^{\infty}\frac{(-q;q^2)_nq^{n(n+2)}}{(q;q)_{2n+1}}
=\frac{(q^2,q^{10},q^{12};q^{12})_{\infty}}{(q;q)_{\infty}},
\end{equation}
and
\begin{equation}\label{eq:Slater51}
\sum_{n=0}^{\infty}\frac{(-q;q^2)_nq^{n(n+1)}}{(q;q)_{2n+1}}
=\frac{(q^4,q^8,q^{12};q^{12})_{\infty}}{(q;q)_{\infty}}.
\end{equation}

Let $\mathcal{S}_1$ be the set of overpartitions $\lambda=(\lambda_1,\lambda_2,\ldots,\lambda_{\ell})$ such that
\begin{enumerate}
    \item Only even-indexed parts may be overlined.
    \item $\lambda_{2i}-\lambda_{2i+1}\geq1$, and $\lambda_{2i}-\lambda_{2i+1}\geq2$ if $\lambda_{2i}$ is overlined.
\end{enumerate}
The class $\mathcal{S}_1$ is an SIP class of modulus $1$, and the basis $\mathcal{B}_{\mathcal{S}_1}$ is the set of partitions in $\mathcal{S}_1$ such that
\begin{enumerate}
    \item $\lambda_{2i-1}=\lambda_{2i}$ if $\lambda_{2i}>0$.
    \item $\lambda_{2i}-\lambda_{2i+1}=2$ if $\lambda_{2i}$ is overlined, and $\lambda_{2i}-\lambda_{2i+1}=1$ otherwise.
\end{enumerate}
Let $B_{\mathcal S_1}(n,h):=B_{\mathcal S_1}(n,h;a,z,q)$ be the generating function for partitions in $\mathcal{B}_{\mathcal{S}_1}$ with length $n$ and largest part $h$, where $a$, $z$, and $q$ keep track of the number of overlined parts, the number of even-indexed parts, and the weight, respectively. We will show that
$$\sum_{\lambda\in\mathcal{S}_1}a^{\overline{\#}(\lambda)}z^{\lfloor\frac{\#(\lambda)}{2}\rfloor}q^{|\lambda|}=\sum_{n=0}^{\infty}\frac{(-aq^2;q^2)_{n}z^nq^{n(n+1)}}{(q;q)_{2n+1}}.$$
Let $(a,z)\to(1,1)$, $(a,z)\to(q^{-1},1)$, and $(a,z)\to(q^{-1},q)$, we will have the sum sides of \eqref{eq:Slater28}, \eqref{eq:Slater50}, and \eqref{eq:Slater51}, respectively.

\begin{theorem}[Initial values for the $\mathcal S_1$ basis]\label{thm:S1-initial}
The initial values for $B_{\mathcal S_1}(n,h)$ are
\begin{equation*}
    B_{\mathcal S_1}(0,h)=\begin{cases}
        1 &  \text{if $h=0$,}\\
        0 &  \text{otherwise.}
    \end{cases}
\end{equation*}
\begin{equation*}
    B_{\mathcal S_1}(1,h)=\begin{cases}
        q &  \text{if $h=1$,}\\
        0 &  \text{otherwise.}
            \end{cases}
\end{equation*}
\end{theorem}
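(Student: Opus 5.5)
The proof is a direct enumeration of the basis elements with at most one part, read off from the definition of $\mathcal{B}_{\mathcal S_1}$ and the statistics that $B_{\mathcal S_1}(n,h;a,z,q)$ records. Throughout I use the paper's convention that a basis partition of length $n$ is a sequence $(b_1,\ldots,b_n)$ with nonnegative entries, and I set $b_{n+1}=0$ when reading the gap condition at the last position.

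For $n=0$, the only basis partition is the empty one. It has weight $q^0$, no overlined parts, no even-indexed parts, and largest part $0$ by convention. Hence $B_{\mathcal S_1}(0,h)=1$ when $h=0$ and $0$ otherwise.

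For $n=1$, the basis partition is a single part $b_1$, which sits at an odd index. Condition (1) of $\mathcal{S}_1$ therefore forbids overlining it, and no even-indexed part is present, so neither $a$ nor $z$ contributes. It remains to pin down the value of $b_1$:
\begin{itemize}
\item The lower bound comes from the rule that each odd-indexed part $b_{2i+1}$ of a basis partition sits exactly $1$ (or $2$, if overlined) below the preceding even-indexed part, with that part equal to its odd-indexed predecessor.
\item Applying this to the virtual part $b_0$ (taken to be zero) is not meaningful. Instead, the first basis part must be positive in order to carry the distinct partition-of-length-one structure. I will check this against the target series: the $n=0$ term $1/(1-q)$ must be the generating function of $\mathcal{S}_1$-partitions with at most one part, and by \eqref{eq:sip-general-gf} this equals $B(0)+B(1)/(1-q)$ once the lengths are regrouped.
\item The minimal nonzero configuration is therefore $b_1=1$, and its tail $\pi_1\ge 0$ ranges over all multiples of $1$. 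This gives every single part $\lambda_1\ge 1$ exactly once, matching $q/(1-q)$.
\item Uniqueness of the decomposition rules out $b_1\ge 2$, since such a $b_1$ would double-count.
\end{itemize}
So $B_{\mathcal S_1}(1,h)=q$ when $h=1$ and $0$ otherwise.

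The main obstacle is that the basis conditions as stated only constrain pairs $(b_{2i-1},b_{2i})$ and the gaps $b_{2i}-b_{2i+1}$. A singleton therefore has to be settled by minimality: the basis must consist of the componentwise-minimal representatives under subtraction of a partition into nonnegative parts. I would justify $b_1=1$ by showing that the smallest positive part $1$ cannot be reduced further, because parts must be positive. I would then cross-check the value against the $n=0$ term of the claimed generating function $\sum_n (-aq^2;q^2)_n z^n q^{n(n+1)}/(q;q)_{2n+1}$, whose $n=0$ term $1/(1-q)=1+q/(1-q)$ encodes exactly the two initial values.
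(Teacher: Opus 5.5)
Your proposal is correct and matches the paper, which states these initial values without proof as immediate from the definition of $\mathcal{B}_{\mathcal S_1}$. Your appeal to uniqueness of the SIP decomposition correctly settles the value of a lone odd-indexed part, which the literal basis conditions leave undetermined: $(1)$ must lie in the basis to represent $\lambda=(1)$, and any basis element $(m)$ with $m\ge 2$ would give two decompositions $(m)+(0)=(1)+(m-1)$.

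Two small cleanups. First, basis parts are positive rather than merely nonnegative; you do rely on this. Second, the comparison with the $n=0$ term of \eqref{eq:azS1} can only serve as a consistency check, because Theorem~\ref{thm:S1} is itself proved from these initial values. The non-circular version is the direct count $1+q/(1-q)$ of $\mathcal S_1$-partitions of length at most one.
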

\begin{theorem}[Basis recurrence for $\mathcal S_1$]\label{thm:S1-recurrence}
For any $n,h\geq1$, $B_{\mathcal S_1}(n,h)$ satisfies the following recurrence.
\begin{equation}\label{eq:S1Rec}
B_{\mathcal S_1}(n,h)=zq^{2h}B_{\mathcal S_1}(n-2,h-1)+azq^{2h}B_{\mathcal S_1}(n-2,h-2).
\end{equation}
\end{theorem}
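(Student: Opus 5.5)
The plan is a direct bijective argument: strip off the first two parts of a basis partition and check that what remains is again a basis partition, of length $n-2$ with a controlled largest part. First I will fix the reading of the basis $\mathcal B_{\mathcal S_1}$ that is implicit in Theorem~\ref{thm:S1-initial}.

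In a basis partition the parts come in equal pairs $\lambda_{2i-1}=\lambda_{2i}$, with possibly one unpaired smallest part when the length is odd. Consecutive pairs drop by exactly $1$, or by exactly $2$ when the even-indexed member is overlined. When the length is even, the gap condition at the last pair is read against a phantom part $\lambda_{n+1}=0$. Hence the last pair equals $1$, or equals $2$ if overlined. When the length is odd, the unpaired last part is $1$ relative to the same convention, which gives $B_{\mathcal S_1}(1,h)=q\,\delta_{h,1}$. I will also adopt the convention that $B_{\mathcal S_1}(m,h)=0$ whenever $m<0$ or $h<0$. Since a basis partition of length $n\ge 2$ begins with an equal pair, the substantive content of \eqref{eq:S1Rec} concerns $n\ge 2$. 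The case $n=1$ is already fixed by Theorem~\ref{thm:S1-initial}, so I will read the recurrence for $n\ge2$, $h\ge1$.

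Next, let $\lambda\in\mathcal B_{\mathcal S_1}$ have length $n\ge2$ and largest part $h$. Then $\lambda_1=\lambda_2=h$, and these two parts contribute $q^{2h}$ to the weight and one even-indexed part, hence a factor $z$. Deleting them leaves $\mu=(\lambda_3,\dots,\lambda_n)$ of length $n-2$. Since deleting two parts preserves the parity of every index, the odd/even-indexed structure of $\mu$ is inherited from $\lambda$. Therefore $\mu$ satisfies the basis conditions, including the phantom-zero convention at the end and the overlining only at even positions. The largest part of $\mu$ is $\lambda_3$, which is $h-1$ if $\lambda_2$ is not overlined and $h-2$ if it is overlined; the overlined case carries an extra factor $a$.

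Conversely, given any basis partition $\mu$ of length $n-2$ with largest part $h-1$ (respectively $h-2$), prepending the pair $(h,h)$ (respectively $(h,\overline{h})$) produces a basis partition of length $n$ and largest part $h$. The two constructions are mutually inverse. Summing over the two cases gives \eqref{eq:S1Rec}.

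The only delicate step is the boundary. I expect it to be the main obstacle, and I will check it directly using the phantom-zero convention. For $n=2$ the remainder is empty, so $B(0,h-1)$ or $B(0,h-2)$ equals $1$ exactly when $h=1$ or $h=2$. This matches the two length-$2$ basis partitions $(1,1)$ and $(2,\overline 2)$. For $h=1$ the overlined option gives $B(n-2,-1)=0$, consistent with the rule that an overlined part must exceed the next part by $2$. For $n=3$ the remainder is the single part $1$, so $h-1=1$ or $h-2=1$. This gives $(2,2,1)$ and $(3,\overline3,1)$, which are exactly the length-$3$ basis partitions.
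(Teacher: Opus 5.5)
Your proposal is correct and follows the paper's own argument: delete the leading pair $(h,h)$ or $(h,\overline{h})$, which carries weight $zq^{2h}$ or $azq^{2h}$, and note that what remains is a basis partition of length $n-2$ with largest part $h-1$ or $h-2$ respectively. Your extra boundary care fills in details the paper leaves implicit. This includes restricting the recurrence to $n\ge 2$ (at $n=1$ it would contradict $B_{\mathcal S_1}(1,1)=q$ once $B_{\mathcal S_1}(-1,\cdot)=0$), stating the phantom $\lambda_{n+1}=0$ convention, and noting that an unpaired last part equals $1$.
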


\begin{proof}
Given a partition $\lambda$ in $\mathcal{B}_{\mathcal{S}_1}$ with length $n$ and largest part $h$, then the second largest part of $\lambda$ can be either $h$ or $\overline{h}$. In the first case, the third largest part of $\lambda$ will be $h-1$ and the weight of the first two parts is $zq^{2h}$, while in the second case, the third largest part of $\lambda$ is $h-2$, and the weight of the first two parts is $azq^{2h}$. So, by deleting the first two parts from $\lambda$, we have the desired recurrence.
\end{proof}

\begin{theorem}[Closed basis forms for $\mathcal S_1$]\label{thm:S1-closed}
The closed forms for $B_{\mathcal S_1}(n,h)$ are given by
\begin{equation}\label{eq:S1B1}
B_{\mathcal S_1}(2n,n+h)=a^hz^nq^{n(n+1)+h(h+1)}{n\brack h}_{q^2},
\end{equation}
and
\begin{equation}\label{eq:S1B2}
B_{\mathcal S_1}(2n+1,n+h+1)=a^hz^nq^{(n+1)^2+n+h(h+1)}{n\brack h}_{q^2}.
\end{equation}
\end{theorem}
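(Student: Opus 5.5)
We argue by induction on $n$, using the recurrence \eqref{eq:S1Rec} of Theorem~\ref{thm:S1-recurrence} together with the initial values of Theorem~\ref{thm:S1-initial}, and the $q^2$-Pascal rule
\[
{n\brack h}_{q^2}={n-1\brack h}_{q^2}+q^{2(n-h)}{n-1\brack h-1}_{q^2}
\]
(or its companion ${n\brack h}_{q^2}=q^{2h}{n-1\brack h}_{q^2}+{n-1\brack h-1}_{q^2}$, whichever matches the exponents). Two conventions are needed. First, ${n\brack h}_{q^2}=0$ for $h<0$ or $h>n$, so the formulas assert that $B_{\mathcal S_1}(2n,m)$ vanishes unless $n\le m\le 2n$, and similarly for odd length. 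This is consistent with the basis structure: each pair of parts either drops by $1$ or by $2$. Second, the base cases are $n=0$ for both parities. For \eqref{eq:S1B1} this gives $B_{\mathcal S_1}(0,h)=\delta_{h,0}$, and for \eqref{eq:S1B2} it gives $B_{\mathcal S_1}(1,h+1)=q\,\delta_{h,0}$, matching Theorem~\ref{thm:S1-initial}.

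For the even case, apply \eqref{eq:S1Rec} with length $2n$ and largest part $n+h$:
\[
B_{\mathcal S_1}(2n,n+h)=zq^{2(n+h)}B_{\mathcal S_1}(2n-2,(n-1)+h)+azq^{2(n+h)}B_{\mathcal S_1}(2n-2,(n-1)+(h-1)).
\]
Inserting the induction hypothesis turns the right side into
\[
a^hz^n\Big(q^{2n+2h+(n-1)n+h(h+1)}{n-1\brack h}_{q^2}+q^{2n+2h+(n-1)n+h(h-1)}{n-1\brack h-1}_{q^2}\Big).
\]
Factoring out $q^{n(n+1)+h(h+1)}$ leaves $q^{2h}{n-1\brack h}_{q^2}+{n-1\brack h-1}_{q^2}$. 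This equals ${n\brack h}_{q^2}$ by the second Pascal rule. The odd case is identical: length $2n+1$ and largest part $n+h+1$ reduce to length $2n-1$ with largest parts $n+h$ and $n+h-1$. The extra factor $q^{2(n+h+1)}$ against $(n)^2+(n-1)$ versus $(n+1)^2+n$ produces the same bracket after pulling out $q^{(n+1)^2+n+h(h+1)}$.

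The main obstacle is purely the exponent bookkeeping: checking that the exponents line up exactly with one of the two Pascal rules, and that the boundary values $h=0$ and $h=n$ are handled by the vanishing convention. A secondary point to verify is that the recurrence for $n\geq 2$ really covers every case, with lengths $0$ and $1$ serving as the bases, so that the induction on $n$ in steps of $2$ is complete.
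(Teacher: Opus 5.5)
Your proposal is correct and is essentially the paper's argument: the paper also verifies that both closed forms satisfy the recurrence \eqref{eq:S1Rec} via the $q^2$-Pascal rule ${n\brack h}_{q^2}=q^{2h}{n-1\brack h}_{q^2}+{n-1\brack h-1}_{q^2}$, with the same exponent bookkeeping you carry out, and then appeals to the initial values. Your explicit use of the convention that ${n\brack h}_{q^2}$ vanishes outside $0\le h\le n$, together with your $n=0$ base cases for both parities, makes precise what the paper compresses into ``checking the initial values.''
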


\begin{proof}
We start with \eqref{eq:S1B1}, note that
\begin{align*}
B_{\mathcal S_1}(2n,n+h)=&a^hz^nq^{n(n+1)+h(h+1)}{n\brack h}_{q^2}\\
=&a^hz^nq^{n(n+1)+h(h+1)}\left(q^{2h}{n-1\brack h}_{q^2}+{n-1\brack h-1}_{q^2}\right)\\
=&a^hz^nq^{n(n+1)+h(h+1)+2h}{n-1\brack h}_{q^2}+a^hz^nq^{n(n+1)+h(h+1)}{n-1\brack h-1}_{q^2}\\
=&zq^{2n+2h}\cdot a^hz^{n-1}q^{n(n-1)+h(h+1)}{n-1\brack h}_{q^2}\\
&+azq^{2n+2h}\cdot a^{h-1}z^{n-1}q^{n(n-1)+h(h-1)}{n-1\brack h-1}_{q^2}\\
=&zq^{2n+2h}B_{\mathcal S_1}(2n-2,n+h-1)+zaq^{2n+2h}B_{\mathcal S_1}(2n-2,n+h-2),
\end{align*}
so \eqref{eq:S1B1} satisfies \eqref{eq:S1Rec}. Similarly, for \eqref{eq:S1B2},
\begin{align*}
B_{\mathcal S_1}(2n+1,n+h+1)=&a^hz^nq^{(n+1)^2+n+h(h+1)}{n\brack h}_{q^2}\\
=&a^hz^nq^{(n+1)^2+n+h(h+1)}\left(q^{2h}{n-1\brack h}_{q^2}+{n-1\brack h-1}_{q^2}\right)\\
=&a^hz^nq^{(n+1)^2+n+h(h+1)+2h}{n-1\brack h}_{q^2}+a^hz^nq^{(n+1)^2+n+h(h+1)}{n-1\brack h-1}_{q^2}\\
=&zq^{2n+2h+2}\cdot a^hz^{n-1}q^{n^2+n+h(h+1)-1}{n-1\brack h}_{q^2}\\
&+azq^{2n+2h+2}\cdot a^{h-1}z^{n-1}q^{n^2+n+h(h-1)-1}{n-1\brack h-1}_{q^2}\\
=&zq^{2n+2h+2}B_{\mathcal S_1}(2n-1,n+h)+zaq^{2n+2h+2}B_{\mathcal S_1}(2n-1,n+h-1).
\end{align*}
So, they both satisfy the desired recurrence. By checking the initial values, we finish the proof.
\end{proof}

\begin{theorem}\label{thm:S1}
The following three-variable generating function holds for $\mathcal{S}_1$.
\begin{equation}\label{eq:azS1}
\sum_{\lambda\in\mathcal{S}_1}a^{\overline{\#}(\lambda)}z^{\lfloor\frac{\#(\lambda)}{2}\rfloor}q^{|\lambda|}=\sum_{n=0}^{\infty}\frac{(-aq^2;q^2)_{n}z^nq^{n(n+1)}}{(q;q)_{2n+1}}.
\end{equation}
\end{theorem}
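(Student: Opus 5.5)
The plan is to combine the SIP identity \eqref{eq:sip-general-gf} (modulus $k=1$) with the closed basis forms of Theorem~\ref{thm:S1-closed}, and then collapse the two parity classes of lengths into a single summand.

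\textbf{Step 1: the SIP decomposition.} I will first justify that $\mathcal S_1$ is an SIP class of modulus $1$ with basis $\mathcal B_{\mathcal S_1}$, and that the statistics are carried by the basis alone. Given $\lambda\in\mathcal S_1$ of length $\ell$, I subtract from each part the minimal admissible value, built from the bottom up under the convention $\lambda_{\ell+1}=0$:
\begin{itemize}
\item the odd-indexed part $\lambda_{2i-1}$ has minimal value $\lambda_{2i}$;
\item the even-indexed part $\lambda_{2i}$ has minimal value $\lambda_{2i+1}+1$, or $\lambda_{2i+1}+2$ if it is overlined.
\end{itemize}
The overlines stay on the basis partition. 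The differences form a weakly decreasing sequence of nonnegative integers, and conversely adding such a sequence to a basis partition preserves both conditions defining $\mathcal S_1$. Since adding the tail changes neither the length nor the overlined positions, the weights $a^{\overline{\#}}$ and $z^{\lfloor \ell/2\rfloor}$ depend only on the basis. Hence
\[
\sum_{\lambda\in\mathcal S_1}a^{\overline{\#}(\lambda)}z^{\lfloor\#(\lambda)/2\rfloor}q^{|\lambda|}=\sum_{m\ge0}\frac{B_{\mathcal S_1}(m)}{(q;q)_m},\qquad B_{\mathcal S_1}(m):=\sum_h B_{\mathcal S_1}(m,h).
\]
I expect this bijection check to be the main obstacle. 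The delicate points are the boundary behaviour of the last part, together with the requirement that the recurrence in Theorem~\ref{thm:S1-recurrence} and the initial values in Theorem~\ref{thm:S1-initial} describe exactly these minimal configurations, so that the index $h$ in \eqref{eq:S1B1}--\eqref{eq:S1B2} runs over the number of overlines, $0\le h\le n$.

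\textbf{Step 2: summing over the largest part.} Using \eqref{eq:S1B1} and the finite $q$-binomial theorem \eqref{eq:FQbinom} with $q\to q^2$ and argument $-aq^2$, I get
\[
B_{\mathcal S_1}(2n)=z^nq^{n(n+1)}\sum_{h=0}^n a^hq^{h^2+h}{n\brack h}_{q^2}=z^nq^{n(n+1)}(-aq^2;q^2)_n .
\]
Similarly, \eqref{eq:S1B2} gives
\[
B_{\mathcal S_1}(2n+1)=z^nq^{(n+1)^2+n}(-aq^2;q^2)_n=q^{2n+1}B_{\mathcal S_1}(2n).
\]

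\textbf{Step 3: combining parities.} Grouping lengths $2n$ and $2n+1$, which carry the same power $z^n$, the contribution of each $n$ is
\[
z^nq^{n(n+1)}(-aq^2;q^2)_n\left(\frac{1}{(q;q)_{2n}}+\frac{q^{2n+1}}{(q;q)_{2n+1}}\right).
\]
Writing the bracket over the common denominator gives
\[
\frac{1}{(q;q)_{2n}}+\frac{q^{2n+1}}{(q;q)_{2n+1}}=\frac{(1-q^{2n+1})+q^{2n+1}}{(q;q)_{2n+1}}=\frac{1}{(q;q)_{2n+1}}.
\]
Summing over $n\ge 0$ therefore yields \eqref{eq:azS1}.
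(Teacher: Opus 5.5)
Your proposal is correct and follows the paper's proof essentially verbatim: the SIP sum over lengths, the closed forms of Theorem~\ref{thm:S1-closed} summed over $h$ by the finite $q$-binomial theorem, and the merger $\frac{1}{(q;q)_{2n}}+\frac{q^{2n+1}}{(q;q)_{2n+1}}=\frac{1}{(q;q)_{2n+1}}$ of lengths $2n$ and $2n+1$ (the paper simply asserts the SIP structure that your Step~1 tries to justify). One small fix in Step~1: compute the minimal values from the basis parts built bottom-up rather than from the $\lambda_j$, and when $\ell$ is odd make the last (odd-indexed) basis part $1$ rather than $\lambda_{\ell+1}=0$ as your convention gives, since otherwise adding a tail with $\pi_\ell=0$ produces a zero part; this is exactly what $B_{\mathcal S_1}(1,1)=q$ and \eqref{eq:S1B2} encode, so your Steps~2 and~3 are unaffected.
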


\begin{proof}
Since $\mathcal{S}_1$ is an SIP class of modulus $1$,
\begin{align*}
\sum_{\lambda\in\mathcal{S}_1}a^{\overline{\#}(\lambda)}z^{\lfloor\frac{\#(\lambda)}{2}\rfloor}q^{|\lambda|}=&\sum_{n=0}^{\infty}\sum_{h=0}^{\infty}\frac{B_{\mathcal S_1}(n,h)}{(q;q)_{n}}\\
=&\sum_{n=0}^{\infty}\sum_{h=0}^{\infty}\frac{B_{\mathcal S_1}(2n,n+h)}{(q;q)_{2n}}+\sum_{n=0}^{\infty}\sum_{h=0}^{\infty}\frac{B_{\mathcal S_1}(2n+1,n+h+1)}{(q;q)_{2n+1}}\\
=&\sum_{n=0}^{\infty}\frac{z^nq^{n(n+1)}}{(q;q)_{2n}}\sum_{h=0}^{\infty}a^hq^{h(h+1)}{n\brack h}_{q^2}+\sum_{n=0}^{\infty}\frac{z^nq^{(n+1)^2+n}}{(q;q)_{2n+1}}\sum_{h=0}^{\infty}a^hq^{h(h+1)}{n\brack h}_{q^2}\\
=&\sum_{n=0}^{\infty}\frac{(-aq^2;q^2)_{n}z^nq^{n(n+1)}}{(q;q)_{2n}}+\sum_{n=0}^{\infty}\frac{(-aq^2;q^2)_{n}z^nq^{(n+1)^2+n}}{(q;q)_{2n+1}}\\
=&\sum_{n=0}^{\infty}\frac{(-aq^2;q^2)_{n}z^nq^{n(n+1)}}{(q;q)_{2n}}\left(1+\frac{q^{2n+1}}{1-q^{2n+1}}\right)\\
=&\sum_{n=0}^{\infty}\frac{(-aq^2;q^2)_{n}z^nq^{n(n+1)}}{(q;q)_{2n+1}}.
\end{align*}
This completes the proof.
\end{proof}
Note that the two double sums in the second line of the proof are counting partitions in $\mathcal{S}_1$ with length being even and odd, respectively. So, we have also proved the following generating functions.
\begin{cor}\label{cor:S1-evenodd}
Let $\mathcal{S}_{1,e}$ and $\mathcal{S}_{1,o}$ be the set of partitions in $\mathcal{S}_{1}$ with length being even and odd, respectively. Then,
\begin{equation}
\sum_{\lambda\in\mathcal{S}_{1,e}}a^{\overline{\#}(\lambda)}z^{\lfloor\frac{\#(\lambda)}{2}\rfloor}q^{|\lambda|}=\sum_{n=0}^{\infty}\frac{(-aq^2;q^2)_{n}z^nq^{n(n+1)}}{(q;q)_{2n}}
\end{equation}
and
\begin{equation}
\sum_{\lambda\in\mathcal{S}_{1,o}}a^{\overline{\#}(\lambda)}z^{\lfloor\frac{\#(\lambda)}{2}\rfloor}q^{|\lambda|}=\sum_{n=0}^{\infty}\frac{(-aq^2;q^2)_{n}z^nq^{(n+1)^2+n}}{(q;q)_{2n+1}}.
\end{equation}
\end{cor}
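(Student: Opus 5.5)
The plan is to read the corollary off the proof of Theorem~\ref{thm:S1}. The key point is that the SIP decomposition for $\mathcal{S}_1$ preserves length, so the split into even and odd lengths can be made inside that proof before the two pieces are recombined. First I will check that for every $\lambda\in\mathcal{S}_1$ the unique decomposition $\lambda_i=b_i+\pi_i$ has the basis partition $(b_1,\dots,b_\ell)\in\mathcal{B}_{\mathcal S_1}$ of the same length $\ell=\#(\lambda)$. This holds by the definition of an SIP basis, with the tail $(\pi_1,\ldots,\pi_\ell)$ having $\ell$ nonnegative parts. Next I will check that the overlined positions of $\lambda$ coincide with those of $b$, since overlining is carried by the basis and the gap condition $\lambda_{2i}-\lambda_{2i+1}\ge 2$ for overlined $\lambda_{2i}$ is attained exactly in the basis. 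So $\overline{\#}(\lambda)=\overline{\#}(b)$ and $\lfloor\#(\lambda)/2\rfloor=\lfloor\#(b)/2\rfloor$. Hence the weight $a^{\overline{\#}}z^{\lfloor\#/2\rfloor}$ factors entirely through the basis, and restricting $\lambda$ to even (respectively odd) length is the same as restricting the basis partition to even (respectively odd) length.

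With this in hand, \eqref{eq:sip-general-gf} with modulus $k=1$, restricted to lengths $2n$, gives
\[
\sum_{\lambda\in\mathcal{S}_{1,e}}a^{\overline{\#}(\lambda)}z^{\lfloor\frac{\#(\lambda)}{2}\rfloor}q^{|\lambda|}=\sum_{n\ge0}\sum_{h\ge0}\frac{B_{\mathcal S_1}(2n,h)}{(q;q)_{2n}},
\]
and the analogous expression holds with $2n+1$ in place of $2n$ for $\mathcal{S}_{1,o}$. I will then reindex the largest part as $n+h$ (respectively $n+h+1$). The recurrence \eqref{eq:S1Rec} with the initial values of Theorem~\ref{thm:S1-initial} shows that $B_{\mathcal S_1}(2n,m)$ vanishes unless $n\le m\le 2n$, and $B_{\mathcal S_1}(2n+1,m)$ vanishes unless $n+1\le m\le 2n+1$, so the reindexing loses no terms. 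Substituting the closed forms \eqref{eq:S1B1} and \eqref{eq:S1B2} from Theorem~\ref{thm:S1-closed}, the inner sum in each case is $\sum_h a^hq^{h(h+1)}{n\brack h}_{q^2}$. By the finite $q$-binomial theorem \eqref{eq:FQbinom} in base $q^2$ with $z=-aq^2$, this equals $(-aq^2;q^2)_n$, which yields both stated series.

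These computations are exactly the first four lines of the proof of Theorem~\ref{thm:S1}, so no new calculation is needed. The only real point to justify is the support claim, that $B_{\mathcal S_1}(2n,m)$ vanishes unless $n\le m\le 2n$ and $B_{\mathcal S_1}(2n+1,m)$ vanishes unless $n+1\le m\le 2n+1$. I will prove it by induction on $n$ using \eqref{eq:S1Rec}: each step appends two parts and raises the largest part by $1$ or $2$, starting from largest part $0$ (length $0$) or $1$ (length $1$).
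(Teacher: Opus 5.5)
Your proposal is correct and follows the paper's own argument. The paper reads the corollary off the second line of the proof of Theorem~\ref{thm:S1}, where the two double sums already separate even and odd lengths before being recombined; your extra checks (length and overline positions preserved under the SIP decomposition, and the support $n\le m\le 2n$, resp.\ $n+1\le m\le 2n+1$, justifying the reindexing of the largest part) make explicit what the paper leaves implicit.
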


\subsection{Transformations and product evaluations}
Next, we consider sum-product identities related to $\mathcal{S}_1$.
\begin{theorem} We have
\begin{equation}
\sum_{n=0}^{\infty}\frac{(-aq^2;q^2)_{n}z^nq^{n(n+1)}}{(q;q)_{2n+1}}=\frac{(-zq^2;q^2)_{\infty}}{(q;q^2)_{\infty}}\sum_{n=0}^{\infty}\frac{(azq;q^2)_n(-1)^nq^{n^2+2n}}{(q^2;q^2)_n(-zq^2;q^2)_n}.\label{eq:azeq28}
\end{equation}
\end{theorem}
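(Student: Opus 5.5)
The plan is to rewrite the left-hand side of \eqref{eq:azeq28} as a series in base $q^2$ and then apply the first equality of Lemma~\ref{lem1}, with $q$ replaced by $q^2$, for a suitable choice of the parameters $A$, $B$, $x$.

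\textbf{Step 1: split the denominator.} We have
\[
(q;q)_{2n+1}=(q;q^2)_{n+1}(q^2;q^2)_n=(1-q)(q^3;q^2)_n(q^2;q^2)_n .
\]
With $Q=q^2$, the exponent becomes $q^{n(n+1)}=Q^{n(n-1)/2}Q^{n}$. Hence the left-hand side equals
\[
\frac{1}{1-q}\sum_{n\ge0}\frac{(-aQ;Q)_n}{(Q;Q)_n(qQ;Q)_n}(-1)^n(-zQ)^nQ^{\frac{n(n-1)}{2}}.
\]
This is exactly the left side of Lemma~\ref{lem1} in base $Q$, with
\[
A=-aq^2,\qquad B=q^3,\qquad x=-zq^2 .
\]

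\textbf{Step 2: apply Lemma~\ref{lem1}.} Its first equality gives the prefactor $(x;Q)_\infty/(B;Q)_\infty=(-zq^2;q^2)_\infty/(q^3;q^2)_\infty$. Combining this with the factor $1/(1-q)$ from Step~1 yields $(-zq^2;q^2)_\infty/(q;q^2)_\infty$, which is the prefactor in \eqref{eq:azeq28}.

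\textbf{Step 3: match the summand.} Inside the transformed sum:
\begin{itemize}
\item the numerator parameter is $Ax/B=(-aq^2)(-zq^2)/q^3=azq$, producing $(azq;q^2)_n$;
\item the denominator is $(Q;Q)_n(x;Q)_n=(q^2;q^2)_n(-zq^2;q^2)_n$;
\item the weight is $(-1)^nQ^{n(n-1)/2}B^n=(-1)^nq^{n^2-n}q^{3n}=(-1)^nq^{n^2+2n}$.
\end{itemize}
This reproduces the right-hand side of \eqref{eq:azeq28} exactly.

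There is no real obstacle. The only care needed is the bookkeeping in Step~1: the extra factor $(1-q)$ must be absorbed so that $B=q^3$ rather than $q$. The limiting argument behind Lemma~\ref{lem1} needs $|z|$ and $|a|$ in a range where the series converge, and the identity extends to all $a$, $z$ by analytic continuation, since both sides are analytic in $a$ and $z$ for $|q|<1$.
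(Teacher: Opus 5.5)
Your proof is correct and follows the same route as the paper: apply the first equality of Lemma~\ref{lem1} in base $q^2$ with $B=q^3$, then divide by $1-q$ to turn $(q^3;q^2)_\infty$ into $(q;q^2)_\infty$. Your parameters $A=-aq^2$, $x=-zq^2$ are the right ones. The paper's stated $A=-a$, $x=-z$ do not literally reproduce the left-hand side and appear to be a slip. Your checks that $Ax/B=azq$ and that the weight becomes $(-1)^nq^{n^2+2n}$ confirm that your choice gives the identity.
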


\begin{proof}
This follows from the first line in Lemma \ref{lem1} with $q\to q^2$, $A=-a$, $B=q^{3}$, $x=-z$, and multiplying both sides by $(1-q)^{-1}$.
\end{proof}

\begin{cor}\label{cors1}
We have
\begin{equation}
\sum_{n\ge 0}\dfrac{(-z^{-1}q;q^2)_n z^n q^{n(n+1)}}{(q;q)_{2n+1}}=\dfrac{(-zq^2;q^2)_\infty}{(q;q^2)_\infty},    
\end{equation}
\begin{equation}
\sum_{n\ge 0}\dfrac{(-z^{-1}q^3;q^2)_n z^n q^{n(n+1)}}{(q;q)_{2n+1}}=\dfrac{(-zq^2;q^2)_\infty}{(q;q^2)_\infty}\sum_{n\ge 0}\dfrac{(-1)^n q^{n^2+2n}}{(-zq^2;q^2)_n},     
\end{equation}
\begin{equation}
\sum_{n\ge 0}\dfrac{q^{2n^2+n}}{(q;q)_{2n+1}}=(-q;q)_\infty,    
\end{equation}
\begin{equation}
\sum_{n\ge 0}\dfrac{q^{2n^2+3n}}{(q;q)_{2n+1}}=\dfrac{1}{(q;q^2)_\infty}\sum_{n\ge 0}(-1)^n q^{n^2+2n}.    
\end{equation}
\end{cor}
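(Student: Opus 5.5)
All four formulas in Corollary~\ref{cors1} will come from specializing the parameter $a$ in \eqref{eq:azeq28}. The aim is to make the factor $(azq;q^2)_n$ on the right-hand side either vanish for $n\ge1$ or cancel against the denominator $(-zq^2;q^2)_n$.

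\textbf{First two identities.} For the first one I set $a=q^{-1}z^{-1}$. The left side of \eqref{eq:azeq28} then contains $(-z^{-1}q;q^2)_n$. On the right, $(azq;q^2)_n=(1;q^2)_n$, which vanishes for every $n\ge1$, so only the $n=0$ term survives. The right side reduces to $(-zq^2;q^2)_\infty/(q;q^2)_\infty$, which is the claim. For the second identity I set $a=qz^{-1}$, so the left side contains $(-z^{-1}q^3;q^2)_n$. On the right, $(azq;q^2)_n=(q^2;q^2)_n$ cancels the $(q^2;q^2)_n$ in the denominator. This leaves $\sum_n (-1)^nq^{n^2+2n}/(-zq^2;q^2)_n$ times the prefactor, as stated.

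\textbf{Last two identities.} These come from the first two by choosing $z=q^n$-type values. In the first identity I take $z=q$. Then $(-z^{-1}q;q^2)_n=(-1;q^2)_n$, so the left side is $\sum_n(-1;q^2)_nq^{n^2+2n}/(q;q)_{2n+1}$. That is not yet of the shape $q^{2n^2+n}/(q;q)_{2n+1}$, so I must check which choice actually yields the stated sums.

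To obtain $q^{2n^2+n}$ from $z^nq^{n(n+1)}$ together with the Pochhammer factor, I will instead let $z\to0$ after multiplying out. Writing $(-z^{-1}q;q^2)_nz^n=\prod_{j=0}^{n-1}(z+q^{2j+1})$, setting $z=0$ gives $q^{n^2}$. The left side then becomes $\sum q^{2n^2+n}/(q;q)_{2n+1}$, and the right side becomes $1/(q;q^2)_\infty=(-q;q)_\infty$ by Euler's identity. This gives the third formula.

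In the same way, for the second identity $(-z^{-1}q^3;q^2)_nz^n=\prod_{j=0}^{n-1}(z+q^{2j+3})\to q^{n^2+2n}$ as $z\to 0$. The summand becomes $q^{2n^2+3n}/(q;q)_{2n+1}$, and the right side becomes $(q;q^2)_\infty^{-1}\sum(-1)^nq^{n^2+2n}$, since $(-zq^2;q^2)_\infty\to1$ and $(-zq^2;q^2)_n\to1$. This gives the fourth formula.

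\textbf{Main obstacle.} The algebra is immediate, so the only real point is justifying the specializations. Setting $a=q^{-1}z^{-1}$ (or $a=qz^{-1}$) and then letting $z\to0$ must be handled as identities of formal power series, or of analytic functions, in which both sides are polynomial in $z$ after clearing $z^{-1}$. I will check that termwise, $\prod_j(z+q^{2j+1})$ is a polynomial in $z$ and that the series converge uniformly for $|z|\le1$. Then the limit may be taken termwise, by dominated convergence, since the summands are dominated by $C|q|^{n^2}$.
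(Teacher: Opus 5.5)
Your proposal is correct and follows the paper's proof: the specializations $a=(zq)^{-1}$ and $a=qz^{-1}$ in \eqref{eq:azeq28} give the first two identities, and letting $z\to0$ in these gives the last two, via $(-z^{-1}q;q^2)_nz^n=\prod_{j=0}^{n-1}(z+q^{2j+1})\to q^{n^2}$ and $(-z^{-1}q^3;q^2)_nz^n\to q^{n^2+2n}$. The abandoned $z=q$ attempt in your third paragraph serves no purpose and can simply be deleted.
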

\begin{proof}
The first identity follows from \eqref{eq:azeq28} by choosing $a=(zq)^{-1}$; the second follows from \eqref{eq:azeq28} by choosing $a=z^{-1}q$. Finally, the third and fourth identities follow by letting $z\to 0$ in the first two identities, respectively.
\end{proof}
Now, setting $a = z = 1$ in \eqref{eq:azeq28}, we get
\begin{align*}
\sum\limits_{n=0}^{\infty}\frac{(-q^2;q^2)_nq^{n^2+n}}{(q;q)_{2n+1}} = \frac{(-q^2;q^2)_{\infty}}{(q;q^2)_{\infty}}\sum\limits_{n=0}^{\infty}\frac{(-1)^n(q;q^2)_nq^{n^2+2n}}{(q^4;q^4)_n}.\numberthis\label{eq:subseq28}
\end{align*}
Letting $x = -q^2$ and $y = q^2$ in the $\mathcal P_4$ refinement from \cite{DGLSIPI}, recorded in \eqref{eq:P4eq1}, and using \eqref{eq:subseq28}, we get
\begin{align*}
\sum\limits_{n=0}^{\infty}\frac{(-q^2;q^2)_nq^{n^2+n}}{(q;q)_{2n+1}} &= \frac{(-q^2;q^2)_{\infty}(q^3;q^2)_{\infty}}{(q;q^2)_{\infty}(q^4;q^2)_{\infty}}\left(1+\frac{(1-q)}{(1-q^2)}\sum\limits_{n=1}^{\infty}(1+q^{2n+1})q^{3n^2+2n}\right)\\
&= \frac{(-q^2;q^2)_{\infty}}{(q^2;q^2)_{\infty}}\sum\limits_{n=-\infty}^{\infty}q^{3n^2+2n}\\
&= \frac{(-q^2;q^2)_{\infty}(-q,-q^5,q^6;q^6)_{\infty}}{(q^2;q^2)_{\infty}}\numberthis\label{eq:JTPeq28},
\end{align*}
where \eqref{eq:JTPeq28} follows by replacing $q\rightarrow q^3$ and $z = q^2$ in Jacobi's Triple Product Identity. This proves ($28$) in Slater's list.

\begin{remark}
Setting $z=1$ in the first identity in Corollary \ref{cors1} gives us a ``companion" of Slater (28). Namely,
\begin{align}\label{slater28comp}
\sum_{n\ge 0}\dfrac{(-q;q^2)_nq^{n^2+n}}{(q;q)_{2n+1}}=\dfrac{(-q^2;q^2)_\infty}{(q;q^2)_\infty}=\dfrac{(q^4;q^4)_\infty}{(q;q)_\infty}.
\end{align}
\end{remark}

\begin{cor}\label{cornew}
We have
\begin{equation}
\begin{split}
 \sum_{n\ge 0}\dfrac{(-aq^2;q^2)_n z^nq^{n(n+1)}}{(q;q)_{2n}}&=\dfrac{(-zq^2;q^2)_\infty}{(q;q^2)_\infty}\sum_{n\ge 0} \dfrac{(azq^3;q^2)_n}{(q^2;q^2)_n(-zq^2;q^2)_n}(-1)^n q^{n^2}\\
&=
\frac{(-zq^2;q^2)_\infty}{(azq^4;q^2)_\infty}
\sum_{n\ge 0}
\frac{1-azq^{4n+2}}{1-azq^2}
\frac{(azq^2,-aq^2,azq^3;q^2)_n}
{(q^2,-zq^2,q;q^2)_n}
z^nq^{3n^2}.
\end{split}
\end{equation}
\end{cor}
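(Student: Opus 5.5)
The first equality should follow from Lemma \ref{lem1}, exactly as \eqref{eq:azeq28} did, but with a different choice of $B$. Work in base $q^2$. Since $(q;q)_{2n}=(q;q^2)_n(q^2;q^2)_n$, the left-hand side equals
\[
\sum_{n\ge0}\frac{(-aq^2;q^2)_n}{(q^2;q^2)_n(q;q^2)_n}z^nq^{n^2+n}.
\]
This has the shape of the left side of Lemma \ref{lem1} (with $q\to q^2$), where the factor $(-1)^nx^nq^{n(n-1)}$ must match $z^nq^{n^2+n}$. So we take
\[
A=-aq^2,\qquad B=q,\qquad x=-zq^2.
\]
Then $Ax/B=azq^3$, and the first line of the lemma gives
\[
\frac{(-zq^2;q^2)_\infty}{(q;q^2)_\infty}\sum_{n\ge0}\frac{(azq^3;q^2)_n}{(q^2;q^2)_n(-zq^2;q^2)_n}(-1)^nq^{n(n-1)}q^n ,
\]
and $q^{n(n-1)}q^n=q^{n^2}$. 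This is the first equality. The only thing to check is that the exponents match; no new idea is needed.

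For the second equality I would use Watson's identity in its limiting form \eqref{eq:mastersubs}, with base $q^2$, read from right to left. The right-hand side of \eqref{eq:mastersubs} has the form
\[
\frac{(\alpha q)_\infty}{(\alpha q/\delta)_\infty}\sum_{n}\frac{(\delta)_n}{(q,\alpha q/\beta)_n}\Bigl(-\frac{\alpha}{\delta}\Bigr)^nq^{n(n+1)/2}.
\]
In base $q^2$, with $\alpha=azq^2$, we get $\alpha q^2=azq^4$, which matches the prefactor $(azq^4;q^2)_\infty$ in the target. The remaining parameters must satisfy three conditions:
\begin{itemize}
\item the numerator factor $(azq^3;q^2)_n$ forces $\delta=azq^3$;
\item the denominator $(-zq^2;q^2)_n$ forces $\alpha q^2/\beta=-zq^2$, so $\beta=-aq^2$;
\item the power $(-\alpha/\delta)^nq^{n(n+1)}$ then equals $(-1)^nq^{-n}q^{n^2+n}=(-1)^nq^{n^2}$, as required.
\end{itemize}
The prefactor becomes $(azq^4;q^2)_\infty/(q;q^2)_\infty$, since $\alpha q^2/\delta=q$.

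On the left-hand side of \eqref{eq:mastersubs}, the quotient $(q\sqrt\alpha,-q\sqrt\alpha)_n/(\sqrt\alpha,-\sqrt\alpha)_n$ in base $q^2$ becomes $(1-azq^{4n+2})/(1-azq^2)$. The remaining factors are $(\alpha,\beta,\delta)_n=(azq^2,-aq^2,azq^3;q^2)_n$ in the numerator and $(q^2,\alpha q^2/\beta,\alpha q^2/\delta)_n=(q^2,-zq^2,q;q^2)_n$ in the denominator. The power is
\[
\Bigl(-\frac{\alpha^2}{\beta\delta}\Bigr)^nq^{3n^2+n}=\Bigl(\frac{z}{q^3}\Bigr)^nq^{3n^2+n},
\]
and this is the step I would check most carefully. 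The target carries $z^nq^{3n^2}$, so the two expressions disagree by a factor of $q^{-2n}$ per term. That signals a bookkeeping issue in the base-$q^2$ version of $q^{n(3n+1)/2}$ or in the choice of $\alpha$.

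If the exponent does not reconcile, I would proceed as follows:
\begin{itemize}
\item First recompute with $\alpha=azq^2$ held fixed, checking both the base-$q^2$ exponent and the placement of $\beta$ versus $\delta$ between the two sides.
\item If that fails, switch to the second line of Lemma \ref{lem1}, or rescale $z$ (for instance $z\to zq^{2}$), to land exactly on the stated form.
\end{itemize}
Once the exponents match, multiplying both sides by $(-zq^2;q^2)_\infty(q;q^2)_\infty/(q;q^2)_\infty$ relates the two displayed right-hand sides, which completes the corollary.
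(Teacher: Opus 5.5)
Your route is essentially the paper's. The first equality comes from Lemma~\ref{lem1} in base $q^2$ with $A=-aq^2$, $B=q$, $x=-zq^2$, exactly as you have it. The second comes from Watson's identity with $\alpha=azq^2$, $\delta=azq^3$ and $-aq^2$ in one of the free slots. The paper sets $\gamma=-aq^2$ in \eqref{eq23} and lets $\beta,\epsilon,N\to\infty$. Your use of \eqref{eq:mastersubs} with $\beta=-aq^2$ is the same thing, because the right-hand side of \eqref{eq23} is symmetric under $\beta\leftrightarrow\gamma$.

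As written, however, the proposal does not close. There is an arithmetic slip at exactly the step you flagged. We have $\alpha^2=a^2z^2q^4$ and $\beta\delta=(-aq^2)(azq^3)=-a^2zq^5$. Hence $-\alpha^2/(\beta\delta)=z/q$, not $z/q^3$, and $(z/q)^nq^{n(3n+1)}=z^nq^{3n^2}$ exactly. So there is no discrepancy, and you should drop the fallback plans. In particular, rescaling $z$ would prove a different identity, not this one.

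The closing step is also misstated. With your parameters, the limiting Watson identity reads
\[
\sum_{n\ge0}\frac{1-azq^{4n+2}}{1-azq^2}\frac{(azq^2,-aq^2,azq^3;q^2)_n}{(q^2,-zq^2,q;q^2)_n}z^nq^{3n^2}
=\frac{(azq^4;q^2)_\infty}{(q;q^2)_\infty}\sum_{n\ge0}\frac{(azq^3;q^2)_n}{(q^2,-zq^2;q^2)_n}(-1)^nq^{n^2}.
\]
You then multiply both sides by $(-zq^2;q^2)_\infty/(azq^4;q^2)_\infty$, not by the factor you wrote, which collapses to $(-zq^2;q^2)_\infty$. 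This identifies the two displayed right-hand sides and completes the corollary.
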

\begin{proof}
Write
\begin{align*}
\sum_{n\ge 0}\dfrac{(-aq^2;q^2)_n z^nq^{n(n+1)}}{(q;q)_{2n}}&=\sum_{n\ge 0}\dfrac{(-aq^2;q^2)_n z^nq^{n(n+1)}}{(q^2;q^2)_{n}(q;q^2)_n}.
\end{align*}
Next, apply Lemma \ref{lem1} with $q\to q^2$, $A=-aq^2$, $B=q$, $x\to -zq^2$ to get
\begin{align*}
(q;q^2)_\infty\sum_{n\ge 0}\dfrac{(-aq^2;q^2)_n z^nq^{n(n+1)}}{(q^2;q^2)_{n}(q;q^2)_n}=(-zq^2;q^2)_\infty\sum_{n\ge 0} \dfrac{(azq^3;q^2)_n}{(q^2;q^2)_n(-zq^2;q^2)_n}(-1)^n q^{n^2}
\end{align*}
which yields the first equality. To get the second equality, we make the following specializations in \eqref{eq23}
\begin{equation*}
q\rightarrow q^2,\qquad
\alpha=azq^2,\qquad
\gamma=-aq^2,\qquad
\delta=azq^3,
\end{equation*}
and then let $\beta,\,\epsilon,\,N\to\infty.$
\end{proof}
\begin{remark}
Substituting $a=z=1/q$ in Corollary~\ref{cornew} recovers Slater's identity (29), which will be interpreted through the odd-position class in the next section.
\end{remark}
\begin{cor}\label{cor:S1-specializations}
The transformation in Corollary~\ref{cornew} yields the following useful specializations:
\begin{align}
\sum_{n\ge 0}\frac{(-z^{-1}q^{-1};q^2)_n z^n q^{n(n+1)}}{(q;q)_{2n}}
&=\frac{(-zq^2;q^2)_\infty}{(q;q^2)_\infty},\label{eq:S1-product-specialization}\\
\sum_{n\ge 0}\frac{z^nq^{n(n+1)}}{(q;q)_{2n}}
&=\frac{(-zq^2;q^2)_\infty}{(q;q^2)_\infty}
\sum_{n\ge 0}\frac{(-1)^nq^{n^2}}{(q^2;q^2)_n(-zq^2;q^2)_n},\label{eq:S1-a0-specialization}\\
\sum_{n\ge 0}\frac{(-z^{-1}q;q^2)_n z^nq^{n(n+1)}}{(q;q)_{2n}}
&=\frac{(-zq^2;q^2)_\infty}{(q;q^2)_\infty}
\sum_{n\ge 0}\frac{(-1)^nq^{n^2}}{(-zq^2;q^2)_n},\label{eq:S1-az-specialization}\\
\sum_{n\ge 0}\frac{q^{2n^2+n}}{(q;q)_{2n}}
&=\frac{1}{(q;q^2)_\infty}\sum_{n\ge 0}(-1)^nq^{n^2},\label{eq:S1-limit-specialization}\\
\sum_{n\ge 0}\frac{(-q;q^2)_nq^{n(n+1)}}{(q;q)_{2n}}
&=\frac{(-q^2;q^2)_\infty}{(q;q^2)_\infty}\,\phi(-q),\label{eq:S1-phi-specialization}\\
\sum_{n\ge 0}\frac{(-q^2;q^2)_nq^{n^2}}{(q;q)_{2n}}
&=\frac{(-q;q^2)_\infty}{(q;q^2)_\infty}\bigl(1+\psi(-q)\bigr).\label{eq:S1-psi-specialization}
\end{align}
Here $\phi(q)$ and $\psi(q)$ are Ramanujan's third-order mock theta functions.
\end{cor}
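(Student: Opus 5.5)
Each identity in Corollary~\ref{cor:S1-specializations} comes from the first equality of Corollary~\ref{cornew},
\[
\sum_{n\ge 0}\frac{(-aq^2;q^2)_n z^nq^{n(n+1)}}{(q;q)_{2n}}=\frac{(-zq^2;q^2)_\infty}{(q;q^2)_\infty}\sum_{n\ge 0}\frac{(azq^3;q^2)_n}{(q^2;q^2)_n(-zq^2;q^2)_n}(-1)^nq^{n^2}.
\]
For each identity I choose $a$ (and sometimes $z$) so that the numerator $(azq^3;q^2)_n$ on the right either vanishes for $n\ge1$ or cancels $(q^2;q^2)_n$. The remaining steps are a limit and the identification of the resulting sums with the third-order mock theta functions.

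\textbf{Steps.}
\begin{enumerate}
\item For \eqref{eq:S1-product-specialization}, take $a=z^{-1}q^{-3}$. Then $-aq^2=-z^{-1}q^{-1}$, which gives the stated left side, and $azq^3=1$. Hence $(1;q^2)_n=0$ for $n\ge1$, so the right-hand sum collapses to $1$.
\item For \eqref{eq:S1-a0-specialization}, simply set $a=0$.
\item For \eqref{eq:S1-az-specialization}, take $a=z^{-1}q^{-1}$. Then $-aq^2=-z^{-1}q$ and $azq^3=q^2$, so $(q^2;q^2)_n$ cancels the denominator factor $(q^2;q^2)_n$.
\item For \eqref{eq:S1-limit-specialization}, let $z\to0$ in \eqref{eq:S1-az-specialization}. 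On the left,
\[
z^n(-z^{-1}q;q^2)_n=\prod_{j=0}^{n-1}\bigl(z+q^{2j+1}\bigr)\to q^{n^2},
\]
so the summand tends to $q^{2n^2+n}/(q;q)_{2n}$. On the right, $(-zq^2;q^2)_\infty\to1$ and $(-zq^2;q^2)_n\to1$. Since $|q|<1$, the series converge uniformly for $z$ near $0$, which justifies passing to the limit termwise.
\item For \eqref{eq:S1-phi-specialization}, set $z=1$ in \eqref{eq:S1-az-specialization}. This gives the prefactor $(-q^2;q^2)_\infty/(q;q^2)_\infty$ times $\sum_{n\ge0}(-1)^nq^{n^2}/(-q^2;q^2)_n$. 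That sum is $\phi(-q)$, because $\phi(q)=\sum_{n\ge0} q^{n^2}/(-q^2;q^2)_n$ and the substitution $q\to-q$ leaves $(-q^2;q^2)_n$ unchanged while sending $q^{n^2}$ to $(-1)^nq^{n^2}$.
\item For \eqref{eq:S1-psi-specialization}, take $a=1$ and $z=q^{-1}$. The left side becomes $\sum_{n\ge0}(-q^2;q^2)_nq^{n^2}/(q;q)_{2n}$. On the right, $azq^3=q^2$ again cancels $(q^2;q^2)_n$, and $(-zq^2;q^2)=(-q;q^2)$, leaving
\[
\frac{(-q;q^2)_\infty}{(q;q^2)_\infty}\sum_{n\ge0}\frac{(-1)^nq^{n^2}}{(-q;q^2)_n}.
\]
Since $\psi(q)=\sum_{n\ge1}q^{n^2}/(q;q^2)_n$, we have $\psi(-q)=\sum_{n\ge1}(-1)^nq^{n^2}/(-q;q^2)_n$, so the sum equals $1+\psi(-q)$, the $1$ being its $n=0$ term.
\end{enumerate}

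\textbf{Main difficulty.} The substitutions themselves are routine. The main care points are the following.
\begin{itemize}
\item The specializations $a=z^{-1}q^{-3}$ and $a=z^{-1}q^{-1}$ must be admissible in Corollary~\ref{cornew}. This holds because Lemma~\ref{lem1} is an identity of analytic functions in $A$, so these values can be substituted directly.
\item The limit $z\to0$ in step~4 must be justified, as above.
\item The mock theta normalizations must be the third-order ones, $\phi(q)=\sum_{n\ge0} q^{n^2}/(-q^2;q^2)_n$ and $\psi(q)=\sum_{n\ge1}q^{n^2}/(q;q^2)_n$.
\end{itemize}
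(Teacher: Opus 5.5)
Your proposal is correct and follows the paper's proof exactly. You use the same specializations of the first equality of Corollary~\ref{cornew}: $a=(zq^3)^{-1}$, then $a=0$, then $az=q^{-1}$, then $z\to0$ and $z=1$ in the third identity, and finally $(a,z)=(1,q^{-1})$. Your remarks on the admissibility of these values, the termwise limit, and the third-order mock theta normalizations fill in details the paper leaves implicit.
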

\begin{proof}
Choose $a=(zq^3)^{-1}$ in Corollary~\ref{cornew} to obtain the first identity. Next, setting $a=0$ and $az=q^{-1}$, respectively, gives the second and third identities. Letting $z\to0$ in the third identity yields the fourth identity. Setting $z=1$ in the third identity gives the fifth identity. Finally, taking $(a,z)=(1,q^{-1})$ in Corollary~\ref{cornew} gives the last identity.
\end{proof}

Letting $z = q$ and $a = q^{-1}$ in \eqref{eq:azS1}, we get
\begin{align*}
\sum\limits_{\lambda\in\mathcal{S}_1}q^{|\lambda|+\lfloor\frac{\#(\lambda)}{2}\rfloor-\overline{\#}(\lambda)} &= \sum\limits_{n=0}^{\infty}\frac{(-q;q^2)_nq^{n^2+2n}}{(q;q)_{2n+1}}\\
&= \frac{(-q^3;q^2)_{\infty}}{(q;q^2)_{\infty}}\sum\limits_{n=0}^{\infty}\frac{(-1)^n(q;q^2)_nq^{n^2+2n}}{(q^2;q^2)_n(-q^3;q^2)_n},\numberthis\label{eq:50rewrite}
\end{align*}
where \eqref{eq:50rewrite} follows from substituting $z = q$ and $a = q^{-1}$ in \eqref{eq:azeq28}. Substituting $(q,\alpha,\beta,\delta)\rightarrow (q^2,q^2,-q,q)$ in \eqref{eq:mastersubs}, from \eqref{eq:50rewrite}, we get
\begin{align*}
\sum\limits_{n=0}^{\infty}\frac{(-q;q^2)_nq^{n^2+2n}}{(q;q)_{2n+1}} &= \frac{(-q^3,q^3;q^2)_{\infty}}{(q,q^4;q^2)_{\infty}}\sum\limits_{n=0}^{\infty}q^{3n^2+3n}\\
&= \frac{(q^6;q^4)_{\infty}(q^{12};q^{12})_{\infty}}{(q,q^4;q^2)_{\infty}(q^6;q^{12})_{\infty}},
\end{align*}
where the last line above follows from Gauss' triangular number identity; see, for example, \cite{A}. Thus, we have
\begin{align*}
\sum\limits_{n=0}^{\infty}\frac{(-q;q^2)_nq^{n^2+2n}}{(q;q)_{2n+1}} = \frac{(q^2,q^{10},q^{12};q^{12})_{\infty}}{(q;q)_{\infty}}.
\end{align*}
This proves (50).

Now, letting $a = q^{-1}$ and $z = 1$ in \eqref{eq:azS1}, we get
\begin{align*}
\sum\limits_{\lambda\in\mathcal{S}_1}q^{|\lambda|-\overline{\#}(\lambda)} &= \sum\limits_{n=0}^{\infty}\frac{(-q;q^2)_nq^{n^2+n}}{(q;q)_{2n+1}}\\
&= \frac{(-q^2;q^2)_{\infty}}{(q;q^2)_{\infty}},\numberthis\label{eq:51rewrite}
\end{align*}
where \eqref{eq:51rewrite} follows from substituting $z = 1$ and $a = q^{-1}$ in \eqref{eq:azeq28}. Thus, we have
\begin{align*}
\sum\limits_{n=0}^{\infty}\frac{(-q;q^2)_nq^{n^2+n}}{(q;q)_{2n+1}} = \frac{(q^4,q^8,q^{12};q^{12})_{\infty}}{(q;q)_{\infty}}.
\end{align*}
This proves (51).

\section{Odd-position overpartitions and identities (29), (47), and (48)}\label{sec:S2}

In this section, we treat Equation $(29)$, $(47)$, and $(48)$ in Slater's list. They are given by
\begin{equation}\label{eq:Slater29}
\sum_{n=0}^{\infty}\frac{(-q;q^2)_{n}q^{n^2}}{(q;q)_{2n}}
=\dfrac{(-q;q^2)_{\infty}(-q^2,-q^4,q^6;q^6)_{\infty}}{(q^2;q^2)_{\infty}},
\end{equation}
\begin{equation}\label{eq:Slater47}
\sum_{n=0}^{\infty}\frac{(-1;q^2)_nq^{n^2}}{(q;q)_{2n}}
=\frac{(q^5,q^7,q^{12};q^{12})_{\infty}+q(q,q^{11},q^{12};q^{12})_{\infty}}{(q;q)_{\infty}},
\end{equation}
and
\begin{equation}\label{eq:Slater48}
\sum_{n=0}^{\infty}\frac{(-1;q^2)_nq^{n(n+1)}}{(q;q)_{2n}}
=\frac{(q^5,q^7,q^{12};q^{12})_{\infty}-q(q,q^{11},q^{12};q^{12})_{\infty}}{(q;q)_{\infty}}.
\end{equation}

Let $\mathcal{S}_2$ be the set of overpartitions $\lambda=(\lambda_1,\lambda_2,\ldots,\lambda_\ell)$ such that
\begin{enumerate}
    \item Only odd-indexed parts may be overlined.
    \item $\lambda_{2i-1}-\lambda_{2i}\geq1$, and $\lambda_{2i-1}-\lambda_{2i}\geq2$ if $\lambda_{2i-1}$ is overlined.
\end{enumerate}

The class $\mathcal{S}_2$ is likewise an SIP class of modulus $1$, and the basis $\mathcal{B}_{\mathcal{S}_2}$ is the set of partitions in $\mathcal{S}_2$ such that
\begin{enumerate}
    \item $\lambda_{2i}=\lambda_{2i+1}$ if $\lambda_{2i+1}>0$.
    \item $\lambda_{2i-1}-\lambda_{2i}=2$ if $\lambda_{2i-1}$ is overlined, and $\lambda_{2i-1}-\lambda_{2i}=1$ otherwise.
\end{enumerate}
Let $B_{\mathcal S_2}(n,h):=B_{\mathcal S_2}(n,h;a,z,q)$ be the generating function for partitions in $\mathcal{B}_{\mathcal{S}_2}$ with length $n$ and largest part $h$, where $a$, $z$, and $q$ keep track of the number of overlined parts, the number of odd-indexed parts, and the weight, respectively. We will show that
$$\sum_{\lambda\in\mathcal{S}_2}a^{\overline{\#}(\lambda)}z^{\lceil\frac{\#(\lambda)}{2}\rceil}q^{|\lambda|}=\sum_{n=0}^{\infty}\frac{(-aq;q^2)_{n}z^nq^{n^2}}{(q;q)_{2n}}.$$
Let $(a,z)\to(1,1)$, $(a,z)\to(q^{-1},1)$, and $(a,z)\to(q^{-1},q)$, we will have the sum sides of \eqref{eq:Slater29}, \eqref{eq:Slater47}, and \eqref{eq:Slater48}, respectively.

\begin{theorem}[Initial values for the $\mathcal S_2$ basis]\label{thm:S2-initial}
The initial values for $B_{\mathcal S_2}(n,h)$ are
\begin{equation*}
    B_{\mathcal S_2}(0,h)=\begin{cases}
        1 &  \text{if $h=0$,}\\
        0 &  \text{otherwise.}
    \end{cases}
\end{equation*}
\begin{equation*}
    B_{\mathcal S_2}(1,h)=\begin{cases}
        zq &  \text{if $h=1$,}\\
        azq^2 & \text{if $h=2$,}\\
        0 &  \text{otherwise.}
            \end{cases}
\end{equation*}
\end{theorem}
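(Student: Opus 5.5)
We verify the initial values by listing the members of $\mathcal{B}_{\mathcal S_2}$ directly from the two defining conditions of the basis. In doing so we use the weights fixed in the definition of $B_{\mathcal S_2}(n,h)$: $a$ per overlined part, $z$ per odd-indexed part, and $q^{|\lambda|}$. We also adopt the convention, implicit in the SIP decomposition, that parts of a basis partition may be $0$ and that a part beyond the length is read as $\lambda_{n+1}=0$. This is the same convention used for the $\mathcal S_1$ initial values, where $B_{\mathcal S_1}(1,1)=q$ comes from the single part $\lambda_1=1$ with $\lambda_1-\lambda_2=1$.

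For $n=0$, the only basis partition is the empty one. Its largest part is taken to be $h=0$, it has no parts, and its weight is $1$. So $B_{\mathcal S_2}(0,0)=1$ and $B_{\mathcal S_2}(0,h)=0$ for $h\neq0$.

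For $n=1$, the basis partition is a single part $\lambda_1$. This part sits at the odd index $1$, so it contributes the factor $z$ and may be overlined. Condition (2) of the basis, applied with $i=1$ and $\lambda_2=0$, forces $\lambda_1-0=1$ when $\lambda_1$ is not overlined and $\lambda_1-0=2$ when $\lambda_1$ is overlined. Condition (1) imposes nothing, since there is no $\lambda_3$. The basis therefore contains exactly two one-part partitions:
\begin{itemize}
\item $(1)$, with weight $zq$;
\item $(\overline{2})$, with weight $azq^2$.
\end{itemize}
Both satisfy the defining inequalities of $\mathcal S_2$, since $\lambda_1-\lambda_2\ge1$, respectively $\ge 2$ when overlined. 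Hence $B_{\mathcal S_2}(1,1)=zq$, $B_{\mathcal S_2}(1,2)=azq^2$, and $B_{\mathcal S_2}(1,h)=0$ for every other $h$.

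The only delicate point is justifying the convention $\lambda_{\ell+1}=0$ for the last odd-indexed part. This convention is what makes $(1)$ and $(\overline 2)$ the minimal configurations; without it, the one-part basis elements would be $(0)$ and $(\overline{1})$ instead. We justify it by checking consistency with the SIP decomposition. Any one-part member of $\mathcal S_2$ is either an unoverlined $m\ge1$ or an overlined $\overline m$ with $m\ge2$. Each of these decomposes uniquely as a basis part, $1$ or $\overline 2$, plus a nonnegative tail part, so the one-part contribution to the generating function is $(zq+azq^2)/(1-q)$, as required by \eqref{eq:sip-general-gf}. This is the same reasoning that underlies $B_{\mathcal S_1}(1,1)=q$, so we take it as settled.
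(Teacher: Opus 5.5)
Your enumeration is correct and is the same direct check against the basis conditions, read with $\lambda_{\ell+1}=0$, that the paper relies on; the paper states these initial values without a separate proof. Three side remarks are off, though none affects the values you derive:
\begin{enumerate}
\item Without that convention the one-part basis elements would be $(1)$ and $(\overline{1})$, not $(0)$ and $(\overline{1})$, since basis parts are positive and only tail entries may vanish.
\item Your ``consistency check'' is circular because it already assumes $\overline{1}\notin\mathcal S_2$. The convention is part of the intended definition, as the $z^1$-coefficient $q(1+aq)/(q;q)_2$ of \eqref{eq:azS2} confirms.
\item $B_{\mathcal S_1}(1,1)=q$ does not illustrate the convention, since $\mathcal S_1$ imposes no gap condition on $\lambda_1-\lambda_2$.
\end{enumerate}
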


\begin{theorem}[Basis recurrence for $\mathcal S_2$]\label{thm:S2-recurrence}
For any $n,h\geq1$, $B_{\mathcal S_2}(n,h)$ satisfies the following recurrence.
\begin{equation}\label{eq:S2Rec}
B_{\mathcal S_2}(n,h)=zq^{2h-1}B_{\mathcal S_2}(n-2,h-1)+azq^{2h-2}B_{\mathcal S_2}(n-2,h-2).
\end{equation}
\end{theorem}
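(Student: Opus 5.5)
The plan is to mirror the proof of Theorem~\ref{thm:S1-recurrence}: describe how a basis partition in $\mathcal{B}_{\mathcal S_2}$ of length $n\ge 2$ and largest part $h$ begins, and delete its first two parts. Such a partition opens with a pair $(\lambda_1,\lambda_2)$, where $\lambda_1$ is odd-indexed and so may be overlined. By the basis conditions, $\lambda_2=\lambda_1-1=h-1$ when $\lambda_1=h$ is not overlined, and $\lambda_2=h-2$ when $\lambda_1=\overline{h}$. In the basis, $\lambda_3=\lambda_2$ whenever $\lambda_3>0$. Hence the remaining partition $(\lambda_3,\ldots,\lambda_n)$ has length $n-2$ and largest part $\lambda_2$. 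Its own first part $\lambda_3$ is odd-indexed within the shorter partition, so it may again be overlined, with the same gap rule.

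The first step is to check that deleting the first two parts is a bijection. It should map basis partitions of length $n$, largest part $h$, and non-overlined first part onto basis partitions of length $n-2$ and largest part $h-1$. It should map those with overlined first part onto basis partitions of length $n-2$ and largest part $h-2$. The inverse prepends $(h,h-1)$ or $(\overline{h},h-2)$. For this to work, the shifted indices must preserve parity, so the overlining rule (odd positions only) and the gap conditions are inherited. The gluing condition $\lambda_2=\lambda_3$ is automatic because the tail's largest part equals $\lambda_2$.

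The second step is the weight bookkeeping. In the non-overlined case the removed parts contribute $q^{h+(h-1)}=q^{2h-1}$, plus one odd-indexed part (a factor $z$) and no overline. In the overlined case they contribute $q^{h+(h-2)}=q^{2h-2}$, a factor $z$, and a factor $a$. Summing the two cases gives \eqref{eq:S2Rec}.

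The main obstacle is the boundary cases, where the tail has length $0$ or $1$ (that is, $n=2$ or $n=3$), or where $\lambda_2$ could be $0$. I would handle these by following the convention in Theorem~\ref{thm:S1-recurrence}: trailing zero parts in the basis are allowed, so $\lambda_2\ge 0$. Then $B_{\mathcal S_2}(0,h)$ is supported at $h=0$, which forces $h=1$ (weight $zq$) or $\overline{2}$ (weight $azq^2$) when $n=2$. I would check this against the initial values in Theorem~\ref{thm:S2-initial}: for $n=1$ the length-one basis partitions are exactly $(1)$ and $(\overline2)$, matching the two terms $zq^{2h-1}$ and $azq^{2h-2}$ with a zero second part. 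This confirms the recurrence is consistent with those conventions. I would also verify that when $\lambda_3>0$ it equals $\lambda_2$, so that the largest part of the tail is correctly recorded even when it is zero.
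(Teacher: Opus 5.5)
Your deletion argument and weight bookkeeping are the same as the paper's proof. They are correct whenever the tail $(\lambda_3,\ldots,\lambda_n)$ is nonempty, i.e.\ for $n\ge 3$; the paper's proof, which refers to the third part, also covers only this range.

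The problem is your treatment of $n=2$. You present it as a confirmation, but it rests on a wrong convention. In $\mathcal S_2$ the gap condition sits inside each pair $(\lambda_{2i-1},\lambda_{2i})$. The last part of an even-length partition is constrained only by positivity, so minimality forces it to equal $1$. The length-two basis partitions are therefore $(2,1)$ and $(\overline{3},1)$, giving $B_{\mathcal S_2}(2,2)=zq^3$ and $B_{\mathcal S_2}(2,3)=azq^4$. This agrees with \eqref{eq:S2B1} at $n=1$ and with the even-length term $z(q^3+aq^4)/(q;q)_2$ in the proof of Theorem~\ref{thm:S2-gf}.

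Allowing $\lambda_2=0$ replaces these by $(1,0)$ and $(\overline{2},0)$, which are just the length-one basis partitions padded with a zero. Your $B_{\mathcal S_2}(2,\cdot)$ then equals $B_{\mathcal S_2}(1,\cdot)$. Since
\[
\frac{zq+azq^2}{(q;q)_2}=\frac{zq^3+azq^4}{(q;q)_2}+\frac{zq+azq^2}{1-q},
\]
the SIP sum would count every length-one partition of $\mathcal S_2$ a second time, through tails with $\pi_2=0$. The match with Theorem~\ref{thm:S2-initial} that you observe is a symptom of this conflation, not a check. Nor is this the $\mathcal S_1$ convention. There the gap follows the even-indexed part, so the last pair $(1,1)$ or $(2,\overline{2})$ is measured against a phantom $\lambda_{2n+1}=0$, and no zero parts ever occur.

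The correct boundary picture is as follows. With $B_{\mathcal S_2}(0,h)$ supported at $h=0$, as in Theorem~\ref{thm:S2-initial}, the recurrence at $n=2$ is false: it returns $zq$ at $h=1$ and $azq^2$ at $h=2$. It holds at $n=2$ only if the empty partition is assigned largest part $1$ in the even-length chain, i.e.\ $B_{\mathcal S_2}(0,1)=1$ and $B_{\mathcal S_2}(0,h)=0$ otherwise. This is exactly what \eqref{eq:S2B1} gives at $n=0$. At $n=1$ the right-hand side involves $B_{\mathcal S_2}(-1,\cdot)$, so $B_{\mathcal S_2}(1,\cdot)$ must be taken as initial data. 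You should therefore prove \eqref{eq:S2Rec} for $n\ge 3$ by your deletion bijection and treat $n=2$ separately (equivalently, use the shifted value of $B_{\mathcal S_2}(0,\cdot)$), rather than forcing it through zero parts.
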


\begin{proof}
Given a partition $\lambda$ in $\mathcal{B}_{\mathcal{S}_2}$ with length $n$ and largest part $h$, then this largest part can be $h$ or $\overline{h}$. If the largest part is not overlined, the second and the third largest part of $\lambda$ will be $h-1$, and the weight of the first two parts is $zq^{2h-1}$. If the largest part is overlined, then the second and the third largest part of $\lambda$ will both be $h-2$, and the weight of the first two parts is $azq^{2h-2}$. So, by deleting the first two parts from $\lambda$, we have the desired recurrence.
\end{proof}

\begin{theorem}[Closed basis forms for $\mathcal S_2$]\label{thm:S2-closed}
The closed forms for $B_{\mathcal S_2}(n,h)$ are given by
\begin{equation}\label{eq:S2B1}
B_{\mathcal S_2}(2n,n+h+1)=a^hz^nq^{n^2+2n+h^2}{n\brack h}_{q^2}
\end{equation}
and
\begin{equation}\label{eq:S2B2}
B_{\mathcal S_2}(2n-1,n+h)=a^hz^nq^{n^2+h^2}{n\brack h}_{q^2}.
\end{equation}
\end{theorem}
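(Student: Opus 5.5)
The plan is the same as for Theorem~\ref{thm:S1-closed}: show that the two closed forms satisfy the recurrence \eqref{eq:S2Rec} of Theorem~\ref{thm:S2-recurrence}, check enough initial values, and conclude by induction on the length. The recurrence lowers the length by $2$, so the even-length family \eqref{eq:S2B1} and the odd-length family \eqref{eq:S2B2} can be handled separately. Each family is determined by its own base case together with \eqref{eq:S2Rec}.

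\emph{Odd lengths.} The base case is length $1$, i.e.\ $n=1$ in \eqref{eq:S2B2}. This gives $B_{\mathcal S_2}(1,1)=zq$ (from $h=0$) and $B_{\mathcal S_2}(1,2)=azq^2$ (from $h=1$), and $0$ for all other $h$, which matches Theorem~\ref{thm:S2-initial}.

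For the inductive step, expand ${n\brack h}_{q^2}=q^{2h}{n-1\brack h}_{q^2}+{n-1\brack h-1}_{q^2}$ in \eqref{eq:S2B2}. Then match each term with the corresponding term of \eqref{eq:S2Rec} at largest part $n+h$:
\begin{itemize}
\item In $zq^{2n+2h-1}B_{\mathcal S_2}(2n-3,(n-1)+h)$ the exponent is $2n+2h-1+(n-1)^2+h^2=n^2+h^2+2h$. This is exactly the exponent of the first Pascal term.
\item In $azq^{2n+2h-2}B_{\mathcal S_2}(2n-3,(n-1)+(h-1))$ the exponent is $2n+2h-2+(n-1)^2+(h-1)^2=n^2+h^2$. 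This matches the second Pascal term, and the powers of $a$ and $z$ agree as well.
\end{itemize}

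\emph{Even lengths.} The same computation applies to \eqref{eq:S2B1}. The two terms of \eqref{eq:S2Rec} at largest part $n+h+1$ give the exponents
\[
2n+2h+1+(n-1)^2+2(n-1)+h^2=n^2+2n+h^2+2h
\]
and
\[
2n+2h+(n-1)^2+2(n-1)+(h-1)^2=n^2+2n+h^2 .
\]
These again match $q^{2h}{n-1\brack h}_{q^2}$ and ${n-1\brack h-1}_{q^2}$ respectively.

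The one point needing care, and the main obstacle, is the even-length base. Formally, \eqref{eq:S2B1} at $n=0$ would give $B_{\mathcal S_2}(0,1)=1$, whereas Theorem~\ref{thm:S2-initial} gives $B_{\mathcal S_2}(0,0)=1$. The reason is that a basis partition of length $2$ has second part equal to the minimal value $1$ rather than being followed by a third part. So I would not start the even chain at length $0$. Instead I would take length $2$ as the base and check it directly. The only basis partitions of length $2$ are $(2,1)$ and $(\overline{3},1)$, with weights $zq^3$ and $azq^4$. These agree with \eqref{eq:S2B1} at $n=1$, $h=0,1$, and \eqref{eq:S2B1} gives $0$ for every other $h$, as required.

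Equivalently, one can adopt the convention that an empty tail behaves like a phantom part $1$ below an even-indexed position. Under this convention the recurrence \eqref{eq:S2Rec} is valid from length $4$ onward and also reproduces the length-$2$ case. Either way, induction on $n$ completes the proof.
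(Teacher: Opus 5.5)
Your proof is correct and follows the same route as the paper: you use the $q^2$-Pascal rule to verify that both closed forms satisfy \eqref{eq:S2Rec}, and then you check base cases. Your handling of the even-length base is more careful than the paper's. The paper just sets $n=0$ in \eqref{eq:S2B1}, which gives $B_{\mathcal S_2}(0,1)=1$ rather than the $B_{\mathcal S_2}(0,0)=1$ of Theorem~\ref{thm:S2-initial}; with the latter, \eqref{eq:S2Rec} at length $2$ would wrongly produce $zq$ and $azq^2$. Starting at length $2$ with $(2,1)$ and $(\overline{3},1)$, or using your phantom-part convention, is exactly the right fix.
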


\begin{proof}
By letting $n\to0$ in \eqref{eq:S2B1} and $n\to1$ in \eqref{eq:S2B2}, we can easily check the initial values. So, it remains to check the recurrence relation. For \eqref{eq:S2B1},
\begin{align*}
B_{\mathcal S_2}(2n,n+h+1)=&a^hz^nq^{n^2+2n+h^2}{n\brack h}_{q^2}\\
=&a^hz^nq^{n^2+2n+h^2}\left(q^{2h}{n-1\brack h}_{q^2}+{n-1\brack{h-1}}_{q^2}\right)\\
=&a^hz^nq^{n^2+2n+h^2+2h}{n-1\brack h}_{q^2}+a^hz^nq^{n^2+2n+h^2}{n-1\brack{h-1}}_{q^2}\\
=&zq^{2n+2h+1}\cdot a^{h}z^{n-1}q^{(n-1)^2+2(n-1)+h^2}{n-1\brack h}_{q^2}\\
&+azq^{2n+2h}\cdot a^{h-1}z^{n-1}q^{(n-1)^2+2(n-1)+(h-1)^2}{n-1\brack{h-1}}_{q^2}\\
=&zq^{2n+2h+1}B_{\mathcal S_2}(2n-2,n+h)+azq^{2n+2h}B_{\mathcal S_2}(2n-2,n+h+1).
\end{align*}
Similarly, for \eqref{eq:S2B2},
\begin{align*}
B_{\mathcal S_2}(2n-1,n+h)=&a^hz^nq^{n^2+h^2}{n\brack h}_{q^2}\\
=&a^hz^nq^{n^2+h^2}\left(q^{2h}{n-1\brack h}_{q^2}+{n-1\brack h-1}_{q^2}\right)\\
=&a^hz^nq^{n^2+h^2+2h}{n-1\brack h}_{q^2}+a^hz^nq^{n^2+h^2}{n-1\brack h-1}_{q^2}\\
=&zq^{2n+2h-1}\cdot a^hz^{n-1}q^{(n-1)^2+h^2}{n-1\brack h}_{q^2}\\
&+azq^{2h+2h-2}\cdot a^{h-1}z^{n-1}q^{(n-1)^2+(h-1)^2}{n-1\brack h-1}_{q^2}\\
=&zq^{2n+2h-1}B_{\mathcal S_2}(2n-3,n+h-1)+azq^{2n+2h-2}B_{\mathcal S_2}(2n-3,n+h-2).
\end{align*}
This completes the proof.
\end{proof}

\begin{theorem}\label{thm:S2-gf}
The following three-variable generating function holds for $\mathcal{S}_2$.
\begin{equation}\label{eq:azS2}
\sum_{\lambda\in\mathcal{S}_2}a^{\overline{\#}(\lambda)}z^{\lceil\frac{\#(\lambda)}{2}\rceil}q^{|\lambda|}=\sum_{n=0}^{\infty}\frac{(-aq;q^2)_{n}z^nq^{n^2}}{(q;q)_{2n}}.
\end{equation}
\end{theorem}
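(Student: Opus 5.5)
We follow the proof of Theorem~\ref{thm:S1}. Since $\mathcal{S}_2$ is an SIP class of modulus $1$, \eqref{eq:sip-general-gf} with the weights $a$ and $z$ built into the basis gives
\[
\sum_{\lambda\in\mathcal{S}_2}a^{\overline{\#}(\lambda)}z^{\lceil\frac{\#(\lambda)}{2}\rceil}q^{|\lambda|}=\sum_{m\ge0}\sum_{h\ge0}\frac{B_{\mathcal S_2}(m,h)}{(q;q)_m}.
\]
We split the sum over $m$ by parity. For odd lengths we write $m=2n-1$ with $n\ge1$; for even lengths we write $m=2n$ with $n\ge0$. In each case we use Theorem~\ref{thm:S2-closed} to parametrize the largest part. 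For odd lengths the largest part is $n+h$, and the basis weight is $a^hz^nq^{n^2+h^2}{n\brack h}_{q^2}$. For even lengths the largest part is $n+h+1$, and the basis weight is $a^hz^nq^{n^2+2n+h^2}{n\brack h}_{q^2}$. The case $m=0$ has to be treated separately, since the empty partition contributes $1$ and is not of the form \eqref{eq:S2B1} with largest part $n+h+1$. We also need to check that the closed forms exhaust all nonzero $B_{\mathcal S_2}(m,h)$, i.e.\ that the largest part ranges exactly over $n,\dots,2n$ (odd) and $n+1,\dots,2n+1$ (even). This follows from the recurrence \eqref{eq:S2Rec} and the initial values in Theorem~\ref{thm:S2-initial}.

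Next we sum over $h$ using the finite $q$-binomial theorem \eqref{eq:FQbinom} with $q\to q^2$ and $z\to -aq$:
\[
\sum_{h=0}^n a^hq^{h^2}{n\brack h}_{q^2}=(-aq;q^2)_n .
\]
This turns the odd-length part into
\[
\sum_{n\ge1}\frac{(-aq;q^2)_nz^nq^{n^2}}{(q;q)_{2n-1}}
\]
and the even-length part into
\[
1+\sum_{n\ge1}\frac{(-aq;q^2)_nz^nq^{n^2+2n}}{(q;q)_{2n}}
\]
(for $n=0$ in the even case the formula gives $q^0=1$, matching the empty partition). These give refinements analogous to Corollary~\ref{cor:S1-evenodd}.

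Finally we combine the two sums termwise for each $n\ge1$:
\[
\frac{1}{(q;q)_{2n-1}}+\frac{q^{2n}}{(q;q)_{2n}}=\frac{1-q^{2n}+q^{2n}}{(q;q)_{2n}}=\frac{1}{(q;q)_{2n}}.
\]
Together with the $n=0$ term $1$, this yields \eqref{eq:azS2}.

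The main obstacle is bookkeeping rather than analysis. We have to make sure the indexing of the closed forms, especially the offset $n+h+1$ in \eqref{eq:S2B1}, really covers every basis partition exactly once. We also have to confirm that the empty partition and the length-one initial values are consistent with the $n=0$ and $n=1$ cases. If the offset caused trouble, we would recheck the smallest even-length cases ($m=2$, with largest parts $2$ and $3$) directly against the basis definition.
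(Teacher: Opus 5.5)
Your proposal is correct and follows the paper's proof essentially step for step. Both arguments use the SIP decomposition, split by parity of length using the closed forms of Theorem~\ref{thm:S2-closed}, sum over $h$ with the finite $q$-binomial theorem, and combine termwise via $\frac{1}{(q;q)_{2n-1}}+\frac{q^{2n}}{(q;q)_{2n}}=\frac{1}{(q;q)_{2n}}$. Your separate treatment of the empty partition is slightly more careful than the paper: \eqref{eq:S2B1} at $n=0$ really encodes the virtual start $B_{\mathcal S_2}(0,1)=1$ for the even-length chain, whereas \eqref{eq:S2Rec} with $B_{\mathcal S_2}(0,h)=\delta_{h,0}$ would give wrong values at length $2$, so exhaustiveness there is better checked directly from the basis, as you suggest for $m=2$.
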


\begin{proof}
Since $\mathcal{S}_2$ is an SIP class of modulus $1$,
\begin{align*}
\sum_{\lambda\in\mathcal{S}_2}a^{\overline{\#}(\lambda)}z^{\lceil\frac{\#(\lambda)}{2}\rceil}q^{|\lambda|}=&\sum_{n=0}^{\infty}\sum_{h=0}^{\infty}\frac{B_{\mathcal S_2}(n,h)}{(q;q)_n}\\
=&\sum_{n=0}^{\infty}\sum_{h=0}^{\infty}\frac{B_{\mathcal S_2}(2n,n+h+1)}{(q;q)_{2n}}+\sum_{n=1}^{\infty}\sum_{h=0}^{\infty}\frac{B_{\mathcal S_2}(2n-1,n+h)}{(q;q)_{2n-1}}\\
=&\sum_{n=0}^{\infty}\frac{z^nq^{n^2+2n}}{(q;q)_{2n}}\sum_{h=0}^{\infty}a^hq^{h^2}{n\brack h}_{q^2}+\sum_{n=1}^{\infty}\frac{z^nq^{n^2}}{(q;q)_{2n-1}}\sum_{h=0}^{\infty}a^hq^{h^2}{n\brack h}_{q^2}\\
=&\sum_{n=0}^{\infty}\frac{(-aq;q^2)_{n}z^nq^{n^2+2n}}{(q;q)_{2n}}+\sum_{n=1}^{\infty}\frac{(-aq;q^2)_{n}z^nq^{n^2}}{(q;q)_{2n-1}}\\
=&1+\sum_{n=1}^{\infty}\frac{(-aq;q^2)_{n}z^nq^{n^2}}{(q;q)_{2n-1}}\left(\frac{q^{2n}}{1-q^{2n}}+1\right)\\
=&\sum_{n=0}^{\infty}\frac{(-aq;q^2)_{n}z^nq^{n^2}}{(q;q)_{2n}}.
\end{align*}
This completes the proof.
\end{proof}

\begin{remark}\label{rem:S1e-S2}
At the generating-function level, the $\mathcal S_2$ family is obtained from the even-length part of the $\mathcal S_1$ family by a simple change of variables. More precisely, the first identity in Corollary~\ref{cor:S1-evenodd} gives
\[
\sum_{\lambda\in\mathcal S_2}a^{\overline{\#}(\lambda)}z^{\lceil\#(\lambda)/2\rceil}q^{|\lambda|}
=
\sum_{\mu\in\mathcal S_{1,e}}
\left(\frac{a}{q}\right)^{\overline{\#}(\mu)}
\left(\frac{z}{q}\right)^{\#(\mu)/2}q^{|\mu|}.
\]
Indeed, this substitution changes the right-hand side of Corollary~\ref{cor:S1-evenodd} into the series in \eqref{eq:azS2}. This relation explains why several analytic consequences of the two families are closely connected.
\end{remark}

Note that, just as in the proof of Theorem~\ref{thm:S1}, we may divide the partitions by the parity of their length. Thus, we have the following generating functions.

\begin{cor}
Let $\mathcal{S}_{2,e}$ and $\mathcal{S}_{2,o}$ be the sets of partitions in $\mathcal{S}_2$ with length being even and odd, respectively. Then,
\begin{equation}
\sum_{\lambda\in\mathcal{S}_{2,e}}a^{\overline{\#}(\lambda)}z^{\lceil\frac{\#(\lambda)}{2}\rceil}q^{|\lambda|}=\sum_{n=0}^{\infty}\frac{(-aq;q^2)_{n}z^nq^{n^2+2n}}{(q;q)_{2n}}
\end{equation}
and
\begin{equation}
\sum_{\lambda\in\mathcal{S}_{2,o}}a^{\overline{\#}(\lambda)}z^{\lceil\frac{\#(\lambda)}{2}\rceil}q^{|\lambda|}=\sum_{n=1}^{\infty}\frac{(-aq;q^2)_{n}z^nq^{n^2}}{(q;q)_{2n-1}}=\sum_{n=0}^{\infty}\frac{(-aq;q^2)_{n+1}z^{n+1}q^{(n+1)^2}}{(q;q)_{2n+1}}.
\end{equation}
\end{cor}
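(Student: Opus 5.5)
The corollary is read off from the proof of Theorem~\ref{thm:S2-gf}. That proof already splits the SIP sum by the parity of the length, exactly as was done for $\mathcal S_1$ in Corollary~\ref{cor:S1-evenodd}. So the plan is to separate the two double sums rather than combine them.

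\emph{Even-length part.} Since $\mathcal S_2$ is an SIP class of modulus $1$, a partition in $\mathcal S_{2,e}$ with $2n$ parts decomposes uniquely as a basis partition of length $2n$ plus a tail of $2n$ nonnegative parts. The parity of the length is preserved under this decomposition, so the generating function of $\mathcal S_{2,e}$ is
\[
\sum_{n\ge0}\sum_{h\ge0}\frac{B_{\mathcal S_2}(2n,n+h+1)}{(q;q)_{2n}}.
\]
Here I will check that a basis partition of length $2n$ has largest part exactly $n+h+1$, where $h$ is the number of overlines. For $n=0$ this indexing should be read as the empty partition contributing $1$, matching the $n=0$ term of the series.

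Next I insert \eqref{eq:S2B1} and sum over $h$ using the finite $q$-binomial theorem \eqref{eq:FQbinom} with $q\to q^2$ and $z\to -aq$. This gives
\[
\sum_h a^hq^{h^2}{n\brack h}_{q^2}=(-aq;q^2)_n,
\]
which yields the first formula.

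\emph{Odd-length part.} Similarly, the odd-length contribution is
\[
\sum_{n\ge1}\sum_{h\ge0}\frac{B_{\mathcal S_2}(2n-1,n+h)}{(q;q)_{2n-1}}.
\]
Using \eqref{eq:S2B2} and the same $q$-binomial evaluation, this becomes $\sum_{n\ge1}(-aq;q^2)_nz^nq^{n^2}/(q;q)_{2n-1}$. The second expression in the statement then follows by the index shift $n\to n+1$.

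The only point needing care is the bookkeeping: the statistic $z^{\lceil\#(\lambda)/2\rceil}$ must equal $z^n$ for both lengths $2n$ and $2n-1$, and the closed forms must be valid at the initial values. This has already been verified in Theorem~\ref{thm:S2-closed}, so no real obstacle remains.
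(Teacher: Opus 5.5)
Your proposal is correct and matches the paper's argument. The paper simply keeps the even- and odd-length double sums from the proof of Theorem~\ref{thm:S2-gf} separate and evaluates each using \eqref{eq:S2B1}, \eqref{eq:S2B2} and the finite $q$-binomial theorem, as you do; your remark that the $n=0$ term of \eqref{eq:S2B1} must be read as the empty partition (largest part $0$, not $1$) is a worthwhile correction, since Theorem~\ref{thm:S2-initial} literally gives $B_{\mathcal S_2}(0,1)=0$.
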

We now evaluate the two remaining Slater series associated with $\mathcal S_2$.

Letting $a = q^{-1}$ and $z = 1$ in \eqref{eq:azS2}, we get
\begin{align*}
\sum\limits_{\lambda\in\mathcal{S}_2}q^{|\lambda|-\overline{\#}(\lambda)} &= \sum\limits_{n=0}^{\infty}\frac{(-1;q^2)_nq^{n^2}}{(q;q)_{2n}}\\
&= \sum\limits_{n=0}^{\infty}\frac{(-1;q^2)_nq^{n^2}}{(q^2;q^2)_{n}(q;q^2)_{n}}.\numberthis\label{eq:azS2rewrite47}
\end{align*}
Substituting $(q,\alpha,\beta,\delta)\rightarrow (q^2,q^{-1},1,-1)$ in \eqref{eq:mastersubs}, from \eqref{eq:azS2rewrite47}, we get
\begin{align*}
\sum\limits_{n=0}^{\infty}\frac{(-1;q^2)_nq^{n^2}}{(q^2;q^2)_{n}(q;q^2)_{n}} &= \frac{(-q;q^2)_{\infty}}{(q;q^2)_{\infty}}\\
&= \frac{(-q,q^2;q^2)_{\infty}}{(q;q)_{\infty}}\\
&= \frac{(-q;-q)_{\infty}}{(q;q)_{\infty}}.\numberthis\label{eq:negEPNT}
\end{align*}
We note that \eqref{eq:negEPNT} also follows from \eqref{eq:S1-product-specialization} with $z=q^{-1}$. Using Euler's pentagonal number theorem \cite{A} with $q\rightarrow -q$ in \eqref{eq:negEPNT}, we have
\begin{align*}
\sum\limits_{n=0}^{\infty}\frac{(-1;q^2)_nq^{n^2}}{(q^2;q^2)_{n}(q;q^2)_{n}} &= \frac{1}{(q;q)_{\infty}}\sum\limits_{n=-\infty}^{\infty}(-q)^{\frac{n(3n-1)}{2}}\\
&= \frac{1}{(q;q)_{\infty}}\left(\sum\limits_{n=-\infty}^{\infty}(-1)^nq^{6n^2-n} + \sum\limits_{n=-\infty}^{\infty}(-1)^nq^{6n^2+5n+1}\right)\\
&= \frac{(q^5,q^7,q^{12};q^{12})_{\infty}+q(q,q^{11},q^{12};q^{12})_{\infty}}{(q;q)_{\infty}},
\end{align*}
where the last line above follows using Jacobi's Triple Product Identity which gives us (47).

Now, letting $a = q^{-1}$ and $z = q$ in \eqref{eq:azS2}, we get
\begin{align*}
\sum\limits_{\lambda\in\mathcal{S}_2}q^{|\lambda|+\lceil\frac{\#(\lambda)}{2}\rceil-\overline{\#}(\lambda)} &= \sum\limits_{n=0}^{\infty}\frac{(-1;q^2)_nq^{n^2+n}}{(q;q)_{2n}}\\
&= \sum\limits_{n=0}^{\infty}\frac{(-1;q^2)_nq^{n^2+n}}{(q^2;q^2)_{n}(q;q^2)_{n}}.\numberthis\label{eq:azS2rewrite48}
\end{align*}
Substituting $(q,\alpha,\beta,\delta)\rightarrow (q^2,1,q,-1)$ in \eqref{eq:mastersubs}, from \eqref{eq:azS2rewrite48}, we get
\begin{align*}
\sum\limits_{n=0}^{\infty}\frac{(-1;q^2)_nq^{n^2+n}}{(q^2;q^2)_{n}(q;q^2)_{n}} &= \frac{(-q^2;q^2)_{\infty}}{(q^2;q^2)_{\infty}}\left(1+2\sum\limits_{n=1}^{\infty}q^{3n^2}\right)\\
&= \frac{(q,-q^2;q^2)_{\infty}}{(q;q)_{\infty}}\sum\limits_{n=-\infty}^{\infty}q^{3n^2}\\
&= \frac{(q,-q^2;q^2)_{\infty}(-q^3,-q^3,q^6;q^6)_{\infty}}{(q;q)_{\infty}},
\end{align*}
where the last line above follows using Jacobi's Triple Product Identity. We then have
\begin{align*}
\sum\limits_{n=0}^{\infty}\frac{(-1;q^2)_nq^{n^2+n}}{(q^2;q^2)_{n}(q;q^2)_{n}} &= \frac{(q,q^3,q^5,-q^2,-q^4,-q^6,-q^3,-q^3,q^6;q^6)_{\infty}}{(q;q)_{\infty}}\\
&= \frac{(q,-q^2,-q^3,-q^4,q^5,q^6;q^6)_{\infty}}{(q;q)_{\infty}}\\
&= \frac{1}{(q;q)_{\infty}}\sum\limits_{n=-\infty}^{\infty}(-1)^nq^{6n^2-n}(1-q^{6n+1}),
\end{align*}
where the last line above follows using Quintuple Product Identity. Thus, we have
\begin{align*}
\sum\limits_{n=0}^{\infty}\frac{(-1;q^2)_nq^{n^2+n}}{(q^2;q^2)_{n}(q;q^2)_{n}} &= \frac{1}{(q;q)_{\infty}}\left(\sum\limits_{n=-\infty}^{\infty}(-1)^nq^{6n^2-n} - \sum\limits_{n=-\infty}^{\infty}(-1)^nq^{6n^2+5n+1}\right)\\
&= \frac{(q^5,q^7,q^{12};q^{12})_{\infty}-q(q,q^{11},q^{12};q^{12})_{\infty}}{(q;q)_{\infty}},
\end{align*}
where the last line above follows from Jacobi's triple product identity, yielding (48).

\section{SIP and other constructive combinatorics}
\label{sec:applications}

The constructions developed in this paper naturally belong to a broader combinatorial landscape that includes the classical insertion methods of Joichi and Stanton \cite{JoichiStanton1987} and their subsequent use in the theory of overpartitions. Their constructive combinatorial ideas have been utilized, for example, in \cite[Proposition~2.1]{overpartitions}. The proposition gives a direct construction of overpartitions from an ordinary partition together with a partition into distinct bounded parts: each selected part determines a prefix addition, followed by the creation of an overline at the endpoint of that prefix. Thus, several of the sum-side constructions in the present paper may also be described through variants of a Joichi--Stanton type insertion.

This relation should be viewed as an additional structural feature of the present results, rather than as a competing interpretation. Indeed, the two viewpoints operate at different levels. A Joichi--Stanton type map typically begins with a fixed partition and records a finite collection of independent local choices. These choices explain the appearance of a finite $q$-Pochhammer factor. By contrast, an SIP description begins by identifying a canonical family of minimal configurations and then proves that every partition in the class has a unique decomposition into one of these configurations together with a freely extendable tail.

This distinction is especially useful in the present work. The strict overpartition class of Section~3 has a natural staircase-type basis, and its refinement yields the series associated with Slater's identity (12). For the overpartition classes in Sections~4 and~5, the admissible insertions depend on the parity of the position of a part.  The fact that overlining is allowed only at alternating positions is therefore not merely a marking convention: it is part of the structure of the basis itself. The SIP viewpoint keeps track simultaneously of the position, the allowed gap, the overlining statistic, and the freely extendable tail. This produces the multivariate generating functions from which the specializations corresponding to Slater's identities~(28), (29), (47), (48), (50), and (51) arise.

The usefulness of this organization is also visible in the companion identities.  Once a basis has been identified, modifications such as a shift in the smallest-part condition or a uniform displacement of the basis are visible directly at the partition-theoretic level. In the shifted mod-$4$ family of Section~3, this leads to the new G\"ollnitz--Gordon type companion and its signed analogue. Likewise, the even-position and odd-position classes lead to companion transformations that are not apparent from the original entries of Slater's list. Thus the role of SIP in this paper is not only to verify that a given series has a partition interpretation. It provides a framework in which refinements, specializations, shifts of the underlying basis, and new companion identities can be generated from the same structural source.

Accordingly, we do not claim that an SIP interpretation excludes the existence of a classical constructive proof. A single Rogers--Ramanujan type identity may possess several proofs, several bijections, and several natural partition-theoretic meanings.  As George E.~Andrews emphasized to us in a correspondence, this is an ``\emph{embarrassment of riches}'', not a defect of the theory. The value of the SIP framework is its uniformity: it places different identities and different partition families within a single basis--tail formalism, while retaining enough information to produce multivariate refinements and to suggest new identities.

This perspective is consistent with the broader program advocated by Andrews in his work on linked partition ideals \cite{AndrewsLinked}. A useful theory need not settle every problem by one universal construction. Rather, its vitality comes from the coexistence of broad methods that organize many examples and difficult cases that reveal genuinely unexplored structure. In this sense, the interaction between SIP classes and Joichi--Stanton type insertions points toward two complementary directions. One is to determine when a classical insertion construction realizes the basis of an SIP class. The other is to identify SIP classes whose canonical bases lead to refinements or companion identities that are not naturally suggested by a pre-existing insertion. The new companions obtained here provide initial evidence for the second direction. A further indication of the scope of the SIP method comes from the Rogers--Selberg identities. These identities have a substantially more rigid local structure than the examples considered above, and, to the best of our knowledge, they do not presently admit a comparably direct classical constructive treatment through the usual prefix-insertion mechanisms. Our ongoing work shows that their gap conditions nevertheless fit naturally into the SIP framework. The associated canonical bases encode the relevant local restrictions in a systematic way, while the SIP decomposition exposes shifts and refinements of those bases that lead to several Rogers--Selberg companion identities. Thus, even when a classical insertion interpretation is not immediately accessible, SIP analysis can remain effective: it identifies the correct minimal objects, organizes the allowed tails, and turns structural changes in the basis into new $q$-series identities.  This suggests that the value of SIP is not limited to providing another explanation for known constructions, but also lies in making difficult families amenable to a coherent combinatorial analysis.

\section*{Acknowledgments}
We thank Jeremy Lovejoy for bringing \cite{overpartitions} and \cite{JoichiStanton1987} to our attention and for prompting the comparison developed in the previous section. We also thank George E.~Andrews for a valuable private communication concerning the role of uniform structural methods in the study of Rogers--Ramanujan type identities.

\section*{}
\subsection*{Data availability}
No data was used for the research described in the article.

\end{document}